\definecolor{myOlive}{rgb}{.29,.28,.16}
\definecolor{myRed}{rgb}{.78,0,0}
\newtheorem*{rep@theorem}{\rep@title}
\newcommand{\newreptheorem}[2]{%
\newenvironment{rep#1}[1]{%
 \def\rep@title{#2 \ref{##1}}%
 \begin{rep@theorem}}%
 {\end{rep@theorem}}}
\newtheorem{proposition}{Proposition}[section]
\newtheorem{theorem}[proposition]{Theorem}
\newtheorem{corollary}[proposition]{Corollary}
\newtheorem{lemma}[proposition]{Lemma}
\newtheorem*{theorem*}{Theorem}
\newtheorem*{proposition*}{Proposition}
\newtheorem*{lemma*}{Lemma}
\newtheorem*{corollary*}{Corollary}
\theoremstyle{definition}
\newtheorem{definition}[proposition]{Definition}
\newtheorem{question}[proposition]{Question}
\newtheorem{conjecture}[proposition]{Conjecture}
\theoremstyle{remark}
\newtheorem{remark}[proposition]{Remark}
\newcommand{\bdry}{\partial}
\newcommand{\smooth}{\text{sm}}
\newcommand{\N}{\mathbb{N}}
\newcommand{\Z}{\mathbb{Z}}
\newcommand{\F}{\mathcal{F}}
\newcommand{\G}{\mathcal{G}}
\newcommand{\C}{\mathcal{C}}
\newcommand{\W}{\mathcal{W}}
\newcommand{\R}{\mathbb{R}}
\newcommand{\lk}{\operatorname{lk}}
\renewcommand{\int}[1]{\operatorname{int}(#1)}
\newcommand{\B}{\mathcal{B}}
\newcommand{\onto}{\twoheadrightarrow}
\newcommand{\into}{\hookrightarrow}
\begin{document}
\title[Whitney towers and knots in homology spheres]{Whitney tower concordance and knots in homology spheres}

\author{Christopher W.\ Davis}
\address{Department of Mathematics, University of Wisconsin--Eau Claire}
\email{daviscw@uwec.edu}
\urladdr{www.uwec.edu/daviscw}
\date{\today}

\subjclass[2020]{57K10, 57N70}

\begin{abstract}
In \cite{Levine2016} A.~Levine proved the surprising result that there exist knots in homology spheres which are not smoothly concordant to any knot in $S^3$, even if one allows for concordances in  homology cobordisms.  Since then subsequent works due to Hom-Levine-Lidman and Zhou \cite{HLL2018, Zhou21} have strengthened this result showing that there are many knots in homology spheres which are not smoothly concordant to knots in $S^3$.  In this paper we  present evidence that the opposite is true topologically.  We  study the Whitney tower filtration of concordance \cite{COT03} and prove that modulo any term in this filtration every knot (or link) in a homology sphere is equivalent to a knot (or link) in $S^3$.  As an application we recover the main result of \cite{Davis2019}, namely that the solvable filtration similarly fails to distinguish links in homology spheres from links in $S^3$.
\end{abstract}

\maketitle
\section{Introduction and statement of results}

%

Two links $K$ and $J$ in $S^3$ are called  \emph{(topologically) concordant} if $K\times\{1\}$ and $J\times\{0\}$ cobound a disjoint union of locally flat embedded annuli in $S^3\times[0,1]$.   
A link is   \emph{(topologically) slice} if it bounds a disjoint union of locally flat embedded disks, called slice disks, in the 4-ball, $B^4$. 
 A link is slice if and only if it is concordant to the unlink.   
 The set of  concordance classes of links is denoted $\C$.  If these embedded annuli and disks are smooth, then we call the resulting links smoothly concordant and smoothly slice respectively.  We denote by $\C_\smooth$ the set of links up to smooth concordance.  The difference between $\C$ and $\C_\smooth$ has been central to the interplay between topological and smooth 4-manifolds.  For instance, the existence of links which are topologically slice but not smoothly slice can be used to produce smooth 4-manifolds which are homeomorphic but not diffeomorphic to $\R^4$ \cite{Gompf85}.  In this paper we present evidence for the following conjectured difference between smooth and topological concordance of links in homology spheres.
 
 \begin{conjecture}\label{conj: main}
 Let $K\subseteq Y$ be a link a a homology sphere.  There is a link $J$ in $S^3$ and a homology cobordism $W$ from $Y$ to $S^3$ in which $K$ and $J$ cobound a disjoint union of locally flat embedded annuli.  Moreover, $W$ can be chosen to be simply connected.
 \end{conjecture}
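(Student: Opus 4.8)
First I would realize a cobordism of the required kind by hand. By a theorem of Freedman, every homology $3$-sphere $Y$ bounds a compact contractible topological $4$-manifold $C$ (the Rokhlin invariant is no obstruction topologically, since such a $C$ need not be smoothable). Removing an open ball $B\subseteq\int{C}$, the complement $W:=C\setminus\int{B}$ is a cobordism from $Y$ to $S^3$ which is simply connected (apply van Kampen to $C=W\cup_{S^3}B$) and a $\Z$-homology cobordism (apply Mayer--Vietoris); so $W$ is of exactly the type the conjecture calls for, and the whole problem is to fit the annuli inside it. Since the components $K_1,\dots,K_m$ of $K$ are null-homologous in $Y$, each bounds a Seifert surface there; pushing these into the collar $Y\times(0,1)\subseteq W$ at distinct depths produces disjoint properly embedded surfaces $\widehat\Sigma_i\subseteq W$ with $\partial\widehat\Sigma_i=K_i$. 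Deleting a small disk from each and tubing the new boundary circle out to a circle $J_i\subseteq\partial B=S^3$, with the tubes disjoint from one another and from $B$, gives disjoint properly embedded surfaces $A_i\subseteq W$ with $\partial A_i=K_i\sqcup J_i$; it would then suffice to arrange that the $A_i$ are \emph{annuli} --- equivalently, that each $K_i$ is topologically concordant, inside $C$, to a knot in a small $3$-sphere.

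Second I would try to lower the genus of the $A_i$ to zero by ambient surgery. Any genus-reducing simple closed curve $\gamma\subseteq A_i$ is null-homotopic in the simply connected $W$, hence bounds a framed immersed disk, and because the trivial group is good, Freedman's disk embedding theorem converts such a disk into an embedded compressing disk provided it is supplied with an algebraically dual $2$-sphere and its intersections with $A_1,\dots,A_m$ are made to cancel algebraically. Performing all the compressions leaves each $A_i$ an embedded annulus from $K_i$ to some knot $J_i\subseteq S^3$, the annuli mutually disjoint and contained in the simply connected $\Z$-homology cobordism $W$, which is Conjecture~\ref{conj: main}. One subtlety must be built in from the outset: in the naive construction $J_i$ is unknotted, so a successful compression would force $K_i$ to bound a locally flat disk in $C$ --- which it need not --- and one must therefore first isotope $\widehat\Sigma_i$, or choose the tube, so that a nontrivial part of its genus is already carried near the $S^3$-end by a knotted $J_i$, leaving only the residual genus to be removed by surgery.

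The hard part is the hypothesis of the disk embedding theorem: producing, simultaneously for a whole family of compressing disks, algebraically dual spheres in $W$, and arranging that every mutual intersection among the $A_i$ and these disks vanishes algebraically, all without moving $B$ or the other annuli. Over $S^3$ this is the engine behind the classical topological sliceness theorems, with the dual spheres coming from a Seifert surface of the knot; in a general contractible $C$ no such structure is available, and the obvious candidate surfaces tend to meet the intended annuli essentially. That the smooth analogue of Conjecture~\ref{conj: main} is false \cite{Levine2016,HLL2018,Zhou21} shows that any proof must be genuinely topological, exploiting goodness of the fundamental group. It is precisely this gap that leaves the conjecture open, and for this reason our main theorem should be read as the strongest approximation presently available: it carries out all of the modifications above, and hence produces the desired annuli, modulo each fixed term of the Whitney tower filtration of \cite{COT03}, where only finitely many Whitney disks are needed and the disk embedding theorem can be dispensed with.
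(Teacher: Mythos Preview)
This statement is Conjecture~\ref{conj: main}, which the paper explicitly presents as an \emph{open} conjecture; there is no proof in the paper to compare your proposal against. You yourself acknowledge this in your final paragraph, so what you have written is not a proof but a heuristic discussion of where a proof might come from and why the naive approach stalls.

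As such a discussion, your outline is largely reasonable and in fact parallels the paper's setup. The construction of $W$ by puncturing a Freedman contractible filling is exactly what the paper uses in the proof of Theorem~\ref{thm:main}, and your identification of the disk embedding theorem's dual-sphere hypothesis as the essential obstruction is correct: since $H_2(W)=0$, there are no algebraically transverse spheres in $W$, and no choice of tubes or Seifert surfaces will manufacture them.

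One point is confused, however. You suggest that the difficulty with the naive construction (where each $J_i$ comes out unknotted) could be repaired by arranging in advance that ``a nontrivial part of its genus is already carried near the $S^3$-end by a knotted $J_i$.'' But there is no mechanism for knowing which $J_i$ to aim for; the conjecture asserts only that \emph{some} $J$ exists, and any proof would have to let $J$ emerge from the construction rather than be prescribed at the outset. This is precisely how the paper's approximate result works: the proof of Theorem~\ref{thm:main} starts with immersed annuli to the \emph{unlink}, builds a symmetric relative Whitney tower on them via Lemmas~\ref{lem: surface pi1 onto} and~\ref{lem: height raising}, and then the partial relative Whitney moves of Lemma~\ref{lem: relative Whitney tower to Whitney tower} \emph{change the $S^3$-boundary} as they go, producing the link $J$ as output rather than input. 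Your sketch does not contain this mechanism, which is the paper's main new idea.
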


This conjecture is false in the smooth setting, as proved in  \cite{Levine2016} with subsequent improvements in \cite{HLL2018,Zhou21}.    Recall that a homology cobordism between two 3-manifolds $X$ and $Y$ is informally a 4-manifold bounded by $X\sqcup -Y$ which has the same homology as $X\times[0,1]$.  See also for example, \cite[Chapter 1]{GonzalezThesis}.  
 
This conjecture fits into two extensions of concordance to the setting of links in homology spheres.  The first is $\Z$-concordance.  Two links $K$ and $J$ in homology spheres $X$ and $Y$ are called \emph{
$\Z$-concordant} if there is a homology cobordism from $X$ to $Y$ in which the components of $K$ and $J$ cobound disjoint locally flat embedded annuli.  The $\Z$-concordance class of the unlink is the set of links whose components bound disjoint locally flat embedded disks in a homology ball.  $\widehat{\C}_\Z$ is the set of links in homology spheres up to
 $\Z$-concordance.  

Suppose that $W$ is the the homology cobordism of the previous paragraph.  If $\pi_1(X)\to \pi_1(W)$ and $\pi_1(Y)\to\pi_1(W)$ are each surjective then $W$ is called a \emph{strong homology cobordism} and $K$ and $J$ are called \emph{strongly 
 concordant}.  $\widehat{\C}$ denotes the set of all links up to strong topological concordance.  The strong concordance class of the unlink is the the set of all links in homology spheres which bound disjoint unions of embedded disks in contractible 4-manifolds.    There is an inclusion and a projection map
$$\C\overset{i}{\into} \widehat{\C}\overset{p}{\onto}\widehat{\C}_\Z$$ The injectivity of $i$ follows from the 4-dimensional topological Poincar\'e conjecture \cite{Freedman82}.    Conjecture~\ref{conj: main} can now be interpreted as saying that $\C\to \widehat{\C}$ is a surjection, and as a consequence is bijective.

In \cite{Davis2019} the author presents evidence for Conjecture~\ref{conj: main} in the language of the solvable filtration of Cochran-Orr-Teichner \cite{COT03}.  While this filtration has since received intense study and has been used to detect highly subtle structure in the study of link concordance, it is  the weakest of three filtrations studied in that paper.  The others are the symmetric Whitney tower filtration and the grope filtration.  We  focus on the Whitney tower filtration,
$$
\C \supseteq \W_2\supseteq\W_3\supseteq \dots.
$$

A link in $S^3$ sits in $\W_h$ if it bounds a height $h-1$ symmetric Whitney tower in the 4-ball.  While we give a brief discussion presently and a complete definition in Section~\ref{sect: Whitney towers}, one can think of a Whitney tower as an immersed disk which comes close to admitting a regular homotopy to an embedded disk.  Thus, if a link is in $\W_h$ then it is close to being slice.  See for example \cite[Section 2.1]{CST12} and \cite[Definition 7.7]{COT03}.   A \emph{Whitney tower}, $T$, is a 2-complex in a 4-manifold given by starting with an immersed disk, pairing up its double points with (possibly non-embedded) Whitney disks, as in Figure~\ref{fig: Whit disk}, pairing up the double points amongst these Whitney disks with Whitney disks as in Figure~\ref{fig: Whit tower}, and so on.  If any of these Whitney disks is embedded and has interior disjoint from all other surfaces in $T$, then the Whitney trick of Figure~\ref{fig: Whit move} can be used to remove these two intersection points.  See for example \cite[Section 1.2]{DETBook} for a discussion of the Whitney trick.    If each round of Whitney disks added has interior disjoint from all of the previously added Whitney disks then $T$ is called \emph{symmetric} and the height of $T$ records how many rounds of Whitney disks are added.   For any $h\in \N$, $\W_h$ is the set of all links in homology spheres bounding immersed disks in contractible 4-manifolds that extend to height $h-1$ symmetric Whitney towers.

\begin{figure}[h]
     \centering\begin{subfigure}[b]{0.2\textwidth}
         \centering
         \begin{tikzpicture}
        \node at (0,.2){\includegraphics[width=.5\textwidth]{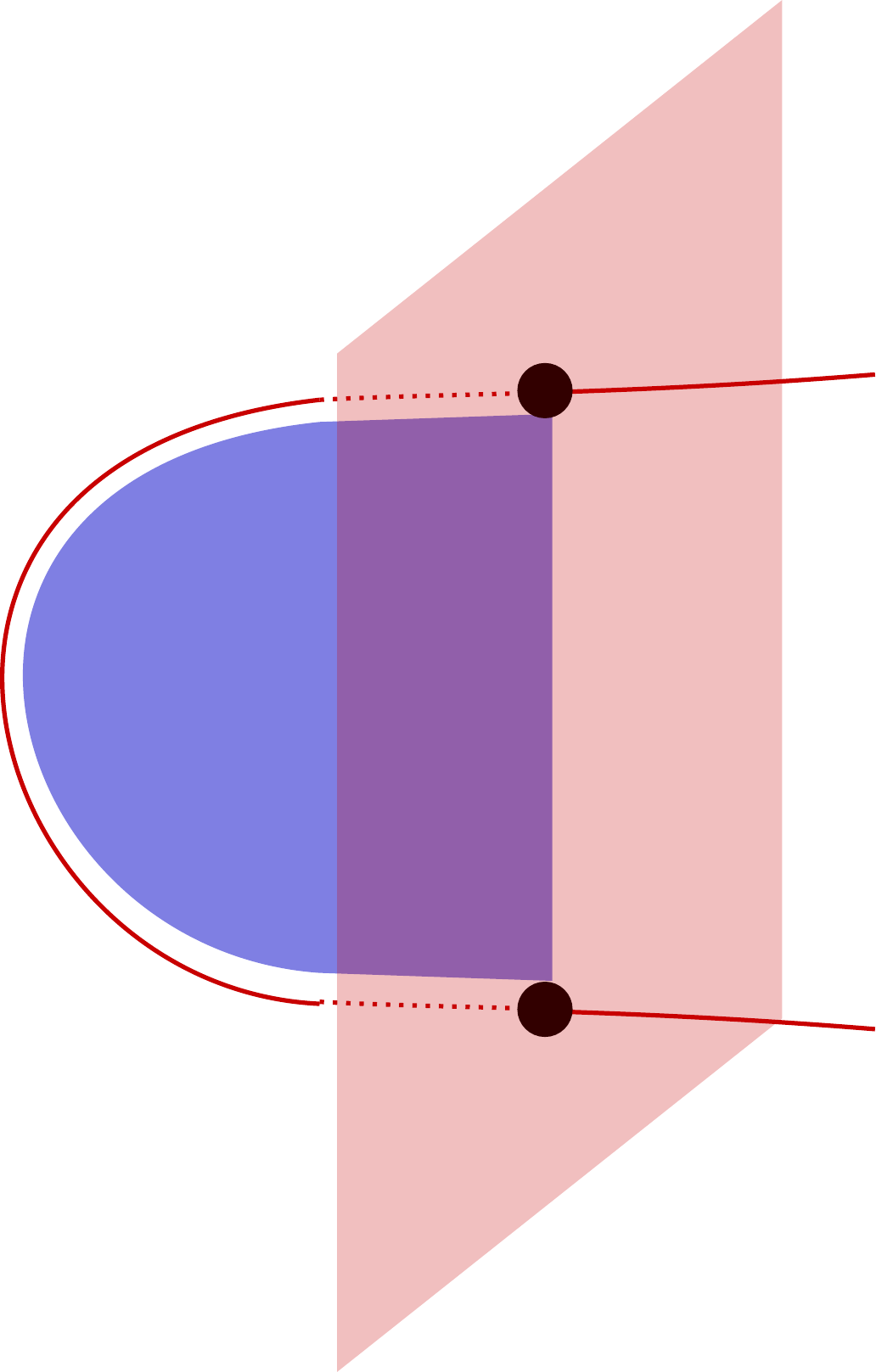}};
         \end{tikzpicture}
     \caption{
     }
     \label{fig: Whit disk}
     \end{subfigure}
     \begin{subfigure}[b]{0.2\textwidth}
         \centering
         \begin{tikzpicture}
        \node at (0,.2){\includegraphics[width=.5\textwidth]{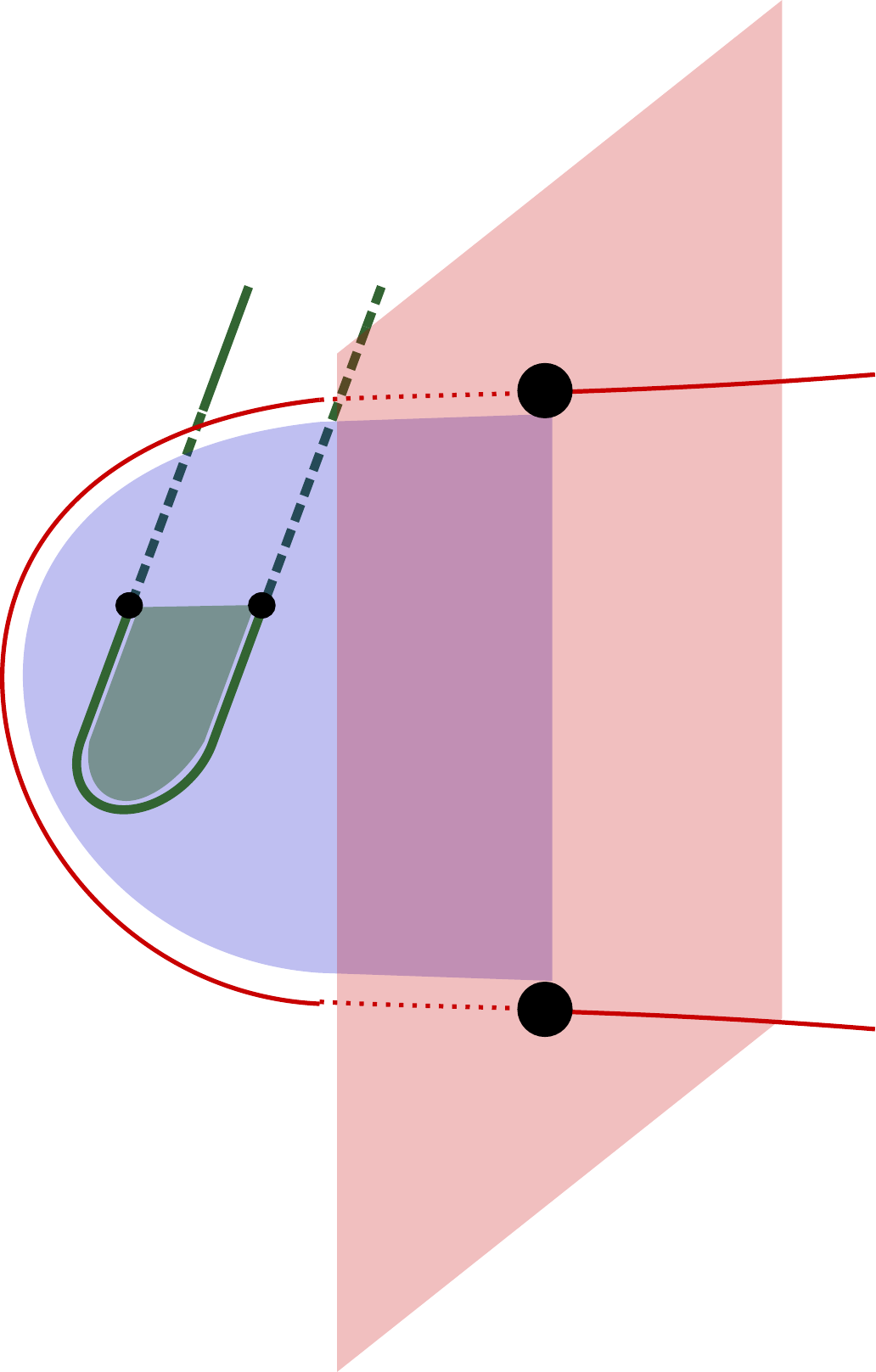}};
         \end{tikzpicture}
     \caption{
     }
     \label{fig: Whit tower}
     \end{subfigure}
     \begin{subfigure}[b]{0.2\textwidth}
         \centering
         \begin{tikzpicture}
        \node at (0,.2){\includegraphics[width=.5\textwidth]{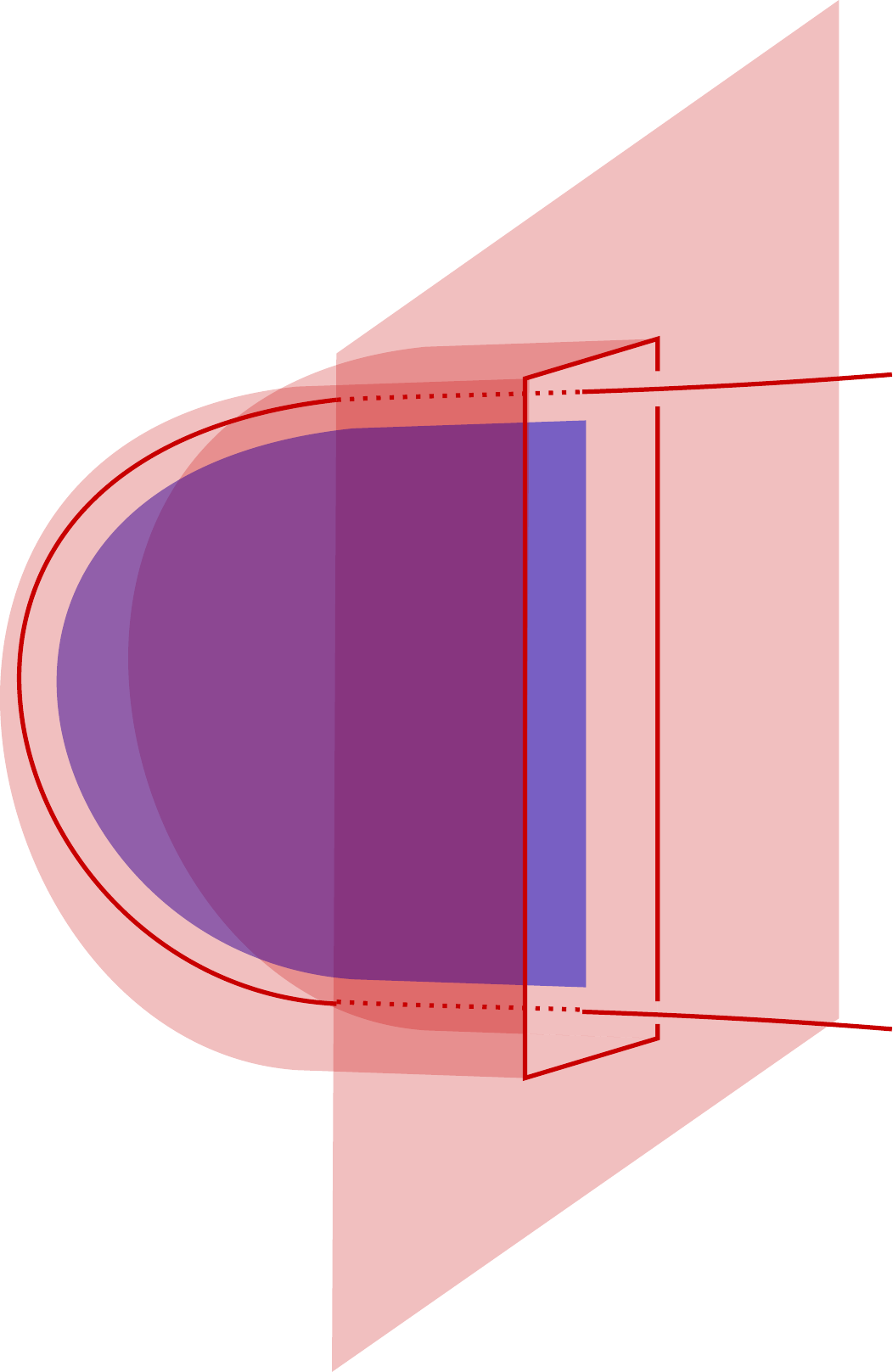}};
         \end{tikzpicture}
     \caption{
     }
     \label{fig: Whit move}
     \end{subfigure}
     
        \caption{Left to right:  A Whitney disk pairing up two points of intersection.  A (portion of a) Whitney tower.  The Whitney moves canceling two points of intersection.
         }
        \label{fig: whit disk and move}
\end{figure}


In \cite[Definition 2.12]{Cha2014}, Cha extends $\W_h$ to an  equivalence relation.  We construct a similar equivalence relation for links in homology spheres as follows:  Links in homology spheres are height $h$ Whitney tower concordant if they cobound an immersed union of annuli in a strong homology cobordism that extends to a height $h-1$ Whitney tower.  See Definition~\ref{defn: symmetric Whitney tower} for a more complete discussion.
Our main result is that every link in a homology sphere is height $h$ Whitney tower concordant to a link in $S^3$.


\begin{theorem}\label{thm:main}
Let $h\in \N$ and $K$ be a link in a homology sphere.  There is some link in $S^3$ which is height $h$ Whitney tower concordant to $K$.  
\end{theorem}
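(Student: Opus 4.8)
The plan is to realize $Y$ by surgery on a framed link in $S^3$, push $K$ into the surgery complement, and then refine the surgery presentation until the trace of the surgery is a height $h$ Whitney tower concordance.

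First I would pick a framed link $L=L_1\cup\dots\cup L_n\subseteq S^3$ with $S^3_L=Y$ (Lickorish--Wallace), isotope $K$ off the surgery solid tori to a link $J\subseteq S^3\setminus\nu(L)$, and let $W_L$ be the trace of the surgery. Then $W_L$ is a simply connected cobordism from $Y$ to $S^3$ (so in particular a \emph{strong} cobordism), and it contains the embedded product annuli $J\times[0,1]$, an honest concordance from $J\subseteq S^3$ to $K\subseteq Y$. The only failure of $W_L$ to be a strong homology cobordism lies in $H_2(W_L)\cong\Z^n$, which carries the unimodular linking--framing form of $L$ and is generated by the cores of the two--handles capped off by Seifert surfaces of the $L_i$ pushed into $B^4$. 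If those capped cores could be made pairwise disjoint and disjoint from $J\times[0,1]$, then $W_L$ would already be the concordance we want, with empty Whitney tower. So the whole task is to choose the surgery presentation so that the capped cores together with the annuli $J\times[0,1]$ assemble into a height $h-1$ Whitney tower satisfying the homological requirements of Definition~\ref{defn: symmetric Whitney tower}.

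For this I would take a presentation adapted to $Y$. By Matveev's theorem every homology sphere is obtained from $S^3$ by a finite sequence of Borromean (Y--graph) surgeries, which by general position can be taken supported in the complement of $K$, hence of $J$. Compiling the associated clasper links into one framed link $L$ gives a presentation whose two--handle cores are organized into Borromean families, and the trace of a single Borromean surgery is, via the standard dictionary between claspers, gropes, and Whitney towers, a height $1$ Whitney tower cobordism: the capped cores form an immersed collection whose intersections are paired by one round of Whitney disks with a single Borromean obstruction left over. To reach height $h-1$ I would iterate the clasper ``splitting'' operation -- a Borromean surgery is itself presented by Borromean surgeries whose defining claspers have leaves lying deeper in the lower central series of the relevant link group, equivalently whose leaves bound gropes of height $h-2$ in the complement -- performing this refinement $h-2$ times and reassembling. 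Since $h$ is fixed this is a finite process; one cannot let $h\to\infty$ (that would force $Y$ to bound a homology ball, which fails, e.g., for the Poincar\'e sphere), but for each fixed $h$ the iteration terminates, and the resulting presentation of $Y$ has the property that the capped cores together with $J\times[0,1]$ form a height $h-1$ Whitney tower in $W_L$. Along the way one tracks the $\pm1$--framings introduced by clasper surgery; these only contribute self--intersections of the ``dual'' sheets and are absorbed into the Whitney tower data. The cobordism $W_L$ with this Whitney tower is then the desired height $h$ Whitney tower concordance from $K\subseteq Y$ to $J\subseteq S^3$.

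I expect the main obstacle to be this geometric ``deepening'': realizing $Y$ by claspers whose leaves are arbitrarily deep while keeping everything disjoint from $K$, keeping exact control of the framings, and verifying that the second homology of $W_L$ is genuinely presented by a height $h-1$ Whitney tower -- not merely an immersed collection of surfaces -- in the precise sense demanded by Definition~\ref{defn: symmetric Whitney tower}. By contrast the linear--algebraic bookkeeping (arranging the correct unimodular linking--framing matrix, for instance by stabilizing with connect sums of $S^3$ presented as surgery on $\pm1$--framed unknots and the $0,0$--framed Hopf link and then handle--sliding) is routine, and the existence of the annuli and of \emph{some} Whitney tower above them is automatic; the content is entirely in controlling its height.
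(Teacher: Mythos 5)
There is a genuine gap, and it appears at the very first step: the ambient $4$-manifold you build is not allowed by Definition~\ref{defn: symmetric Whitney tower}. A height $h$ Whitney tower concordance must live in a \emph{strong homology cobordism} $W$ from $Y$ to $S^3$, so in particular $H_2(W)=0$; the Whitney tower is built only on the annuli $A$, and extra surfaces carrying $H_2$ are not part of the data and cannot be ``absorbed into the Whitney tower data.'' The trace $W_L$ of surgery on an $n$-component link has $H_2(W_L)\cong\Z^n\neq 0$, and making the capped cores disjoint from $J\times[0,1]$, or organizing them into towers, does not change that. (The bookkeeping you describe --- generators of $H_2$ meeting the annuli and each other in a controlled way --- is exactly the structure of \emph{solvable} concordance, Definition~\ref{defn: solvable concordance}, not of Whitney tower concordance.) Worse, this cannot be repaired within your smooth surgery framework: already for $h=1$ the definition forces $Y$ and $S^3$ to cobound a strong homology cobordism, which for the Poincar\'e sphere exists only topologically. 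The paper's proof is essentially forced to start from Freedman--Quinn \cite[Corollary 9.3]{FQ}, taking a simply connected topological homology ball bounded by $Y$ and deleting a ball to get the ambient $W$; your clasper/Kirby-calculus construction avoids this input but has nothing to replace it with.

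The second gap is the height-raising mechanism itself. The clasper ``deepening'' you invoke (leaves pushed deeper into the lower central series) governs the order filtration and asymmetric Whitney towers/gropes of class $n$, not the \emph{symmetric} height filtration of Definition~\ref{defn:Symm WT}, which is tied to derived-series-type depth; no argument is given that the refined surgery presentation produces the nested rounds of symmetric Whitney disks, with each stage's interiors disjoint from all earlier stages, based on the annuli alone. In the paper this is exactly where the work happens: Lemma~\ref{lem: surface pi1 onto} uses finger moves to make $\pi_1(S^3\setminus\nu(A))\to\pi_1(W\setminus\nu(A))$ onto, Lemma~\ref{lem: height raising} then produces relative Whitney disks with arcs in $S^3$ and is iterated to any height, and Lemma~\ref{lem: relative Whitney tower to Whitney tower} converts the relative tower into an honest symmetric tower via partial relative Whitney moves --- at the cost of changing the boundary link in $S^3$, which is why the theorem asserts only that \emph{some} link $J\subseteq S^3$ (depending on $h$) works. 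Your proposal keeps $J$ fixed for all $h$, which is a symptom that the step supplying the height is missing; there is also no verification of the $0$-framing condition on $K\cup -J$, which the paper handles by a separate linking-number argument at the end of its proof.
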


This theorem should be interpreted as saying that any link in a homology sphere cobounds with a link in $S^3$ an immersed union of annuli which is close to admitting a sequence of Whitney moves to a disjoint union of locally flat embedded annuli.  Thus, any link in any homology sphere is close (from the perspective of Whitney towers) to being strongly concordant to a link in $S^3$.  

As mentioned earlier, there are two related filtrations of knot concordance, the solvable filtration $\C\supseteq \F_0\supseteq \F_1\supseteq \dots$ and the grope filtration $\C\supseteq \G_2\supseteq \G_3\dots$ \cite[Definitions 7.9 and 8.5]{COT03}.  The symmetric Whitney tower filtration gives stronger approximations to sliceness than the solvable filtration in that $\W_{h+2}\subseteq \F_h$ \cite[Theorem 8.12]{COT03}.    In \cite{Davis2019} the author defines a sequence of equivalence relations called $n$-solvable concordance and proves that modulo $n$-solvable concordance every link in a homology sphere is equivalent to a link in $S^3$ \cite[Theorem 1.2]{Davis2019}.   In Section~\ref{sect: tower implies sol} we  show  that if links are height $h+2$ Whitney tower concordant then they are $h$-solvably concordant.

\begin{repproposition}{prop: tower to solution}
Let $K$ and $J$ be links in homology spheres.  If $K$ and $J$ are height $h+2$ Whitney tower concordant, then $K$ and $J$ are $h$-solvably concordant.  
\end{repproposition}

We recover the main result of \cite{Davis2019} as a consequence.

\begin{corollary}[Theorem 1.2 of  \cite{Davis2019}]\label{cor:sol}
Let $n\in \N$ and $K$ be a link in a homology sphere.  There is some link $J$ in $S^3$ which is $n$-solvably concordant to $K$.  
\end{corollary}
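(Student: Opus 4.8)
The plan is to obtain this as an immediate consequence of Theorem~\ref{thm:main} together with Proposition~\ref{prop: tower to solution}. Given $n\in\N$ and a link $K$ in a homology sphere $Y$, I would first apply Theorem~\ref{thm:main} with $h=n+2$ to produce a link $J$ in $S^3$ that is height $n+2$ Whitney tower concordant to $K$. I would then feed this into Proposition~\ref{prop: tower to solution}, which says precisely that links which are height $h+2$ Whitney tower concordant are $h$-solvably concordant; taking $h=n$, we conclude that $K$ and $J$ are $n$-solvably concordant, which is exactly the assertion of the corollary.

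The only bookkeeping point is that the two results' indexing conventions must line up, and they do: $\W_h$ is indexed so that a link in $\W_h$ bounds a height $h-1$ symmetric Whitney tower, Theorem~\ref{thm:main} holds for every $h\in\N$, and Proposition~\ref{prop: tower to solution} consumes exactly a height $h+2$ Whitney tower concordance. Choosing $h=n+2$ in the theorem so that it matches $h=n$ in the proposition closes the loop. I do not anticipate any genuine obstacle in this step: all of the substance lives in the proofs of Theorem~\ref{thm:main} (the geometric construction of the Whitney tower concordance inside a strong homology cobordism) and Proposition~\ref{prop: tower to solution} (the comparison between the Whitney tower and solvable filtrations carried out in Section~\ref{sect: tower implies sol}), so the argument here is a one-line composition of those two statements.
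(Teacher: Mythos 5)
Your proposal is correct and is exactly the argument the paper intends: Corollary~\ref{cor:sol} follows immediately by applying Theorem~\ref{thm:main} with $h=n+2$ and then Proposition~\ref{prop: tower to solution} with $h=n$. The indexing check you perform is the only point of care, and you have handled it correctly.
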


\begin{question}
The grope filtration $\G_h$ is at least as strong as each of the Whitney tower and solvable filtrations \cite[Corollary 2]{Schniederman06}.  Very informally, two links are height $h$  grope concordant if they cobound a disjoint union of embedded surfaces which is  close to admitting compressing disks. 
See \cite[Definition 2.14]{Cha2014} for the complete definition.   Does this filtration also fail to detect a difference between knots in homology spheres and knots in $S^3$?  More precisely, for any $h\in \N$ and any link $K$ in a homology sphere, is there is a link $J$ in $S^3$ such that the components of $K$ and $J$ cobound a symmetric annular grope of height $h$ in a strong homology cobordism?  
\end{question}

%

\subsection*{Outline of paper.}
 
  In Section~\ref{sect: Whitney towers} we give the formal definition of symmetric Whitney tower concordance and  its relationship with the symmetric Whitney tower filtration of \cite{COT03}.  In order to the  Whitney tower of Theorem~\ref{thm:main}, we  pass through an object called a relative Whitney tower.  These are introduced in \cite{DOP22} where they are used to transform an immersed union of disks bounded by a link $K$ into a sequence of crossing changes from $K$ to a new link bounding a disjoint union of immersed disks.  In Section~\ref{sect: symm rel} we define symmetric relative Whitney towers and explain how to construct symmetric Whitney towers from them.  In Section~\ref{sect: proof of main} we construct symmetric relative Whitney towers and complete the proof of Theorem~\ref{thm:main}.  In Section~\ref{sect: tower implies sol} we recall the notion of solvable concordance and 
   prove that Whitney tower concordance implies solvable concordance.  
  
 \subsection*{Acknowledgements.} I would like to thank Taylor Martin and Carolyn Otto for a thorough reading of a preliminary draft of this manuscript and Mark Powell for suggesting the content of Section~\ref{sect: tower implies sol}.
  
\section{Symmetric Whitney towers.}\label{sect: Whitney towers}

In order to make sense of  Whitney disks and Whitney towers  in a topological 4-manifold one must have notions like immersion, transversality, and tubular neighborhood. A careful exposition on these topics appears in \cite[Section 3]{PowRayTei}.  We give a very brief summary here.  A continuous function $f:S\to W$ from a surface to a 4-manifold is called a \emph{generic immersion} if it locally is a {smooth generic immersion}, meaning that there is an open cover $\{U_\alpha\}_{\alpha\in A}$ of $W$ with homeomorphisms  $\phi_{\alpha}:U_\alpha\to\R^4$ satisfying that for for all $\alpha$, the composition $\phi_{\alpha}\circ f|_{f^{-1}[U_\alpha]}:{f^{-1}[U_\alpha]} \to \R^4$ is a smooth generic immersion.  Generic immersions admit linear normal bundles and so have tubular neighborhoods homeomorphic to a self plumbing of the total space of a vector bundle over $S$ \cite[Section 9.3]{FQ}.  From here on all immersions are assumed to be generic and all immersed surfaces are assumed to be transverse to each other and to the boundary of $W$.  

  Suppose that $p,q\in f[S]$ are double points of $f$ with opposite sign.  Then $f^{-1}\{p\} = \{p_1, p_2\}$ and $f^{-1}\{q\} = \{q_1, q_2\}$.  For $i=1,2$, let $\alpha_i$  be a smoothly embedded arc in $S$ running from $p_i$ to $q_i$.  Assume that other than its endpoints, $f(\alpha_i)$ is disjoint from all double points of $f$ and that $\alpha_1$ and $\alpha_2$ are disjoint.  Then $\alpha:=f(\alpha_1)*f(\overline{\alpha_2})$ is a simple closed curve in $W$.  (Here $*$ indicates concatenation and $\overline{\alpha_2}$ is the  reverse of $\alpha_2$.) Assuming that $\alpha$ is nullhomotopic in $W$, there is an immersed disk $\Delta\subseteq W$ transverse to $S$ bounded by $\alpha$.  

Being an immersed disk, $\Delta$ has a trivial normal  bundle, $N$, admitting a unique trivialization $\psi:N\to D^2\times \R^2$.  In order for $\Delta$ to be called a Whitney disk, the restriction $\psi|_{\alpha}$ of $\psi$ to $\alpha=\bdry \Delta$ must agree with a trivialization coming from $f:S\to W$.  See \cite[Section 2]{Cha2014}.  In order to be explicit, we construct this  trivialization.  Let $U_1, U_2\subseteq S$ be disjoint disk neighborhoods of $\alpha_1$ and $\alpha_2$ respectively.  By taking $U_1$ and $U_2$ to be small enough, we  arrange that $f|_{U_i}:U_i\to W$ is an embedding for $i=1,2$ and that $f[U_1]\cap f[U_2] = \{p_1,p_2\}$.
At any point $x\in f(\alpha_i)$ let $\vec u_i$ be the tangent to $f(U_i)$ normal to $\alpha_i$ and $\vec v_i$ be normal to both $\Delta$ and $f(U_i)$.  Since the points of intersection $p$ and $q$ have opposite signs, we may arrange that at $p$ and $q$,  $\vec u_1= \vec v_2$ and $\vec u_2= \vec v_1$.  These vectors can be chosen to vary continuously and so give another trivialization $\tau$ of $N|_{\alpha}$.  If $\tau=\psi|_{\alpha}$ then $\Delta$ is a \emph{Whitney disk} for $S$ pairing $p$ and $q$. 

\begin{definition}[Definition 7.7 of \cite{COT03}, Definition 2.5 of \cite{Cha2014}]\label{defn:Symm WT}
Let $S$ be a framed immersed surface in a 4-manifold.  A \emph{symmetric Whitney tower} $T$ of height $h\in \N$ based on $S$ is a sequence $T_0, T_1,\dots, T_h$ of immersed surfaces such that 
\begin{itemize}
\item $T_0=S$, 
\item for $j\ge 1$, $T_j$ is a collection of transverse framed immersed Whitney disks with disjoint boundaries pairing up all of the double points of $T_{j-1}$, and 
\item for $j>i$, $T_j$ has interior disjoint from $T_i$.
\end{itemize}  
We  call $T_i$ the \emph{$i$'th stage of $T$}, and Whitney disks in $T_i$ are called \emph{height $i$}.  If such a $T$ exists then we say that $S$ \emph{extends to $T$}.
\end{definition}

There is a more general notion simply called  a Whitney tower.  For a survey the reader is directed to \cite{Schneiderman20}.  All Whitney towers in this paper are  symmetric.  As a consequence any time we refer a Whitney tower $T$ it should be understood that $T$ is symmetric, even if we omit the word.

\begin{remark}\label{rmk: off by one}
There is a discrepancy between the notion of height in \cite{COT03} and \cite{Cha2014}.  In \cite[Definition 7.7]{COT03} the  immersed surface $S$ is called height 1, while in \cite[Definition 2.5]{Cha2014} it is height $0$.  The definition above follows the conventions of \cite[Definition 2.5]{Cha2014}.  
\end{remark}

\begin{remark}\label{rmk: smoothing}
As is explored in \cite[Remark 2.19]{Cha2014}, we could do equally well  writing the definition above in the smooth setting.  Indeed, suppose that $T\subseteq W$ is a Whitney tower in a 4-manifold.  Let $p\in W\setminus T$.  By  work of Freedman-Quinn \cite[Theorems 8.2]{FQ}, $W\setminus\{p\}$ admits a smooth structure.  As $T$ is immersed in $W$, it has a tubular neighborhood $\nu(T)$ homeomorphic to a self plumbing of a vector bundle over $T$.  Thus, $T$ is smooth with respect to a smooth structure on $\nu(T)$, which admits another smooth structure as a submanifold of $W\setminus \{p\}$.  By \cite[Theorems 8.1A]{FQ}, after changing $T\subset \nu(T)$ by a smooth regular homotopy and an ambient isotopy of $\nu(T)$, $T$ is smooth with respect to the smooth structure coming from $W\setminus \{p\}$.  Since regular homotopy consists of the Whitney move along embedded Whitney disks and its inverse, this operation does not change the height of the resulting Whitney tower.  
\end{remark}

\begin{definition}\label{defn: COT}
Let $K$ be a link in a homology sphere $Y$ and $\B$ be a contractible 4-manifold bounded by $Y$.  We say that $K\in \W_h$ if there is a framed immersed union of disks in $\B$ bounded by $K$, with framing extending the $0$-framing on $K$ and which extends to a height $h-1$ Whitney tower.  
\end{definition} 

This definition inspires an equivalence relation on the set of links in homology spheres.  Instead of building a Whitney tower starting with a union of immersed disks, start with immersed annuli.  

\begin{definition}[Compare with Definition 2.12 of \cite{Cha2014}.]\label{defn: symmetric Whitney tower}
Let $h\in \N$,   $K$ and $J$ be $n$-component links in homology spheres $X$ and $Y$, and $W$ be a strong homology cobordism from $X$ to $Y$.   
For $i=1,\dots, n$ let $A_i$ be an immersed annulus with $\bdry A_i = K_i\cup -J_i$ and over which the $0$-framing on $K_i\cup-J_i$ extends.
  If $A=A_1\cup\dots\cup A_n$ extends to a height $h-1$ symmetric Whitney tower $T$, then we say that $K$ is \emph{height $h$ Whitney tower concordant} to $J$ and write $K\sim_h J$.  We call $T$ a \emph{height $h$ Whitney tower concordance}.
\end{definition}

In the above definition $-J$ indicates the reverse of $J$ sitting in the orientation reverse of $Y$.  We close this section by proving that the definitions of $\W_h$ and $\sim_h$ are compatible, in the sense that a link is in $\W_h$ if and only if it is $\sim_h$-equivalent to the unlink.  


\begin{proposition}\label{prop: compatible}
Let $K$ be a link in a homology sphere and $h\in \N$.  Then $K$ is height $h$ Whitney tower concordant to the unlink in $S^3$ if and only if $K\in \W_h$.
\end{proposition}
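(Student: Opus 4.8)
The plan is to prove both implications directly from the definitions, exploiting that the unlink bounds a disjoint union of embedded disks in $B^4$ and that $B^4$ glued to a contractible 4-manifold along $S^3$ is again contractible.

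First, suppose $K \subseteq Y$ lies in $\W_h$. By Definition~\ref{defn: COT} there is a contractible 4-manifold $\B$ bounded by $Y$ and a framed immersed union of disks $D = D_1 \cup \dots \cup D_n \subseteq \B$ bounded by $K$, with framing extending the $0$-framing, that extends to a height $h-1$ Whitney tower $T$. Let $U$ be the $n$-component unlink in $S^3$, which bounds a disjoint union of embedded disks $D'_1 \cup \dots \cup D'_n$ in $B^4$ with the $0$-framing extending over them. I would form $W := \B \cup_{Y} (Y \times [0,1])$ — or more to the point, simply $W := \B \natural (B^4)$... the cleaner move is to set $W = \B$ viewed as a cobordism from $Y$ to $\emptyset$ and instead build the ambient cobordism from $Y$ to $S^3$ as $\B \cup_{S^3} (B^4 \setminus \int{B^4_{\text{small}}})$, i.e. remove a small open ball from the interior of $\B$; the result is a homology cobordism $W$ from $Y$ to $S^3$, and since $\B$ is contractible, $W$ is simply connected, hence a strong homology cobordism. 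Inside $W$ I would take the annuli $A_i = D_i \cup_{\text{(ball boundary)}} (\text{core annulus of } D'_i)$: concretely, push a small subdisk of each $D'_i$ out through the removed ball and glue its complementary disk to $D_i$, producing immersed annuli from $K_i$ to $U_i$. Since the $D'_i$ are embedded and disjoint and can be made disjoint from the removed ball's neighborhood, the Whitney tower $T$ on $D$ is unchanged and extends $A = A_1 \cup \dots \cup A_n$ to a height $h-1$ symmetric Whitney tower in $W$; the framing condition is inherited. Hence $K \sim_h U$.

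Conversely, suppose $K \sim_h U$ via a strong homology cobordism $W$ from $Y$ to $S^3$, immersed annuli $A_i$ from $K_i$ to $U_i$, and a height $h-1$ Whitney tower $T$ extending $A = \bigcup A_i$, with the $0$-framing extending. Cap off the $S^3$ end of $W$ with $B^4$ to get $\B := W \cup_{S^3} B^4$, a closed-ended 4-manifold bounded by $Y$; since $W$ is a homology cobordism and $B^4$ is contractible, $\B$ has the homology of a point, and since $W$ is a strong homology cobordism, a Mayer–Vietoris / van Kampen argument gives $\pi_1(\B) = 1$, so $\B$ is contractible. Inside $B^4$ take the standard disjoint embedded slice disks $D'_i$ for $U_i$ with the $0$-framing; glue $D'_i$ to $A_i$ along $U_i$ to obtain an immersed disk $D_i \subseteq \B$ bounded by $K_i$, with the $0$-framing extending. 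The embedded disjoint disks $D'_i$ contribute no new double points and meet nothing, so $T$ still extends $D = \bigcup D_i$ to a height $h-1$ symmetric Whitney tower in $\B$. By Definition~\ref{defn: COT}, $K \in \W_h$.

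The main obstacle — and the only place requiring genuine care — is the bookkeeping at the seam where one glues $B^4$ (or its punctured version) to the contractible 4-manifold: one must verify that the gluing produces a \emph{strong} homology cobordism (i.e. check $\pi_1$, not just homology, using that the standard piece is simply connected) in the forward direction, and that capping off recovers a \emph{contractible} 4-manifold in the reverse direction; and one must confirm that the standard embedded slice disks for the unlink, together with their $0$-framings, can be arranged disjointly from the gluing region so that neither the Whitney tower structure nor the framing condition is disturbed. Both are straightforward but deserve an explicit sentence.
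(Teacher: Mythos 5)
Your reverse direction ($K\sim_h U\Rightarrow K\in\W_h$) is correct and matches the paper: cap the $S^3$ end with $B^4$, note $\B$ is contractible, and cap the annuli with slice disks for the unlink; the slice disks live in the new $B^4$ so they introduce no intersections with $T$, and the tower carries over.

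The forward direction ($K\in\W_h\Rightarrow K\sim_h U$) has a real gap, and the wording suggests you haven't quite located the crux. You say: remove a small open ball from the interior of $\B$. But if that ball is chosen generically, it is \emph{disjoint} from $D$, in which case $D_i$ remains a disk in $W$, not an annulus, and its boundary is still just $K_i$ --- nothing ends on the new $S^3$ boundary. The annuli must be $D\setminus\int{B}$, which requires the removed ball to intersect each $D_i$ in a single embedded subdisk (and to miss the Whitney disks). This is not automatic: a single small round ball cannot pass through $n$ pairwise disjoint disks. The paper's construction is the essential step here: pick a point $p_i$ in the interior of each $D_i$, disjoint from all Whitney disks of $T$; take a small closed $4$-ball $B_i$ around $p_i$ so that $B_i\cap D_i=B_i\cap T$ is an embedded disk bounded by an unknot in $\bdry B_i$; choose arcs $\beta_1,\dots,\beta_{n-1}$ disjoint from $T$ joining consecutive $B_i$'s; and tube the $B_i$'s together along the $\beta_i$'s to produce one $4$-ball $B$. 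Then $W=\B\setminus\int{B}$ is a strong homology cobordism from $Y$ to $S^3$ (since $\B$ is contractible), $A=D\setminus\int{B}$ is a union of immersed annuli from $K$ to an unlink, and $T\setminus\int{B}$ is still a height $h-1$ Whitney tower because the $p_i$ and $\beta_i$ were chosen off of the Whitney disks. Your phrase ``push a small subdisk of each $D'_i$ out through the removed ball and glue its complementary disk to $D_i$'' is confusing the two directions: the $D'_i$ (slice disks for the unlink in a standard $B^4$) belong to the converse argument; in this direction nothing is glued in, only removed. This tubing-of-balls step is more than a bookkeeping sentence --- it is what makes $D$ into a union of annuli and what makes the statement of Proposition~\ref{prop: compatible} true.
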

\begin{proof}
Suppose that $K\subseteq Y$ is a link in a homology sphere and $U\subseteq S^3$ be the unlink.  Let $W$ be a strong homology cobordism from $Y$ to $S^3$ and $A\subseteq W$ be an immersed union of annuli bounded by $K$ and $U$.  Furthermore, assume that the $0$ framing on $K\cup U$ extends over $A$ and $A$ extends to a height $h-1$ symmetric Whitney tower, $T$.  A schematic appears in Figure~\ref{fig: WT conc}.  Let $\B$ be the contractible 4-manifold resulting from capping the $S^3$-boundary component of $W$ with a 4-ball.  It follows from the fact that $W$ is a strong homology cobordism that $\B$ is a simply connected homology ball, and so is contractible.  Let $D\subseteq B^4$ be a disjoint union of smoothly embedded disks bounded by $U$.  Notice that $A$ and $D$ both induce the $0$-framing on $U$ so that $A\cup D \subseteq \B$ is an immersed union of disks in a contractible 4-manifold over which the $0$-framing of $K$ extends and which forms the base surface of a height $h-1$ symmetric Whitney tower, $T\cup D$.  Thus, $K\in \W_h$.  A schematic of $A\cup D\subseteq \B$ appears in Figure~\ref{fig:construct slice disk}.

\begin{figure}[h]
     \centering\begin{subfigure}[b]{0.3\textwidth}
         \centering
         \begin{tikzpicture}
        \node at (0,.5){\includegraphics[width=.8\textwidth]{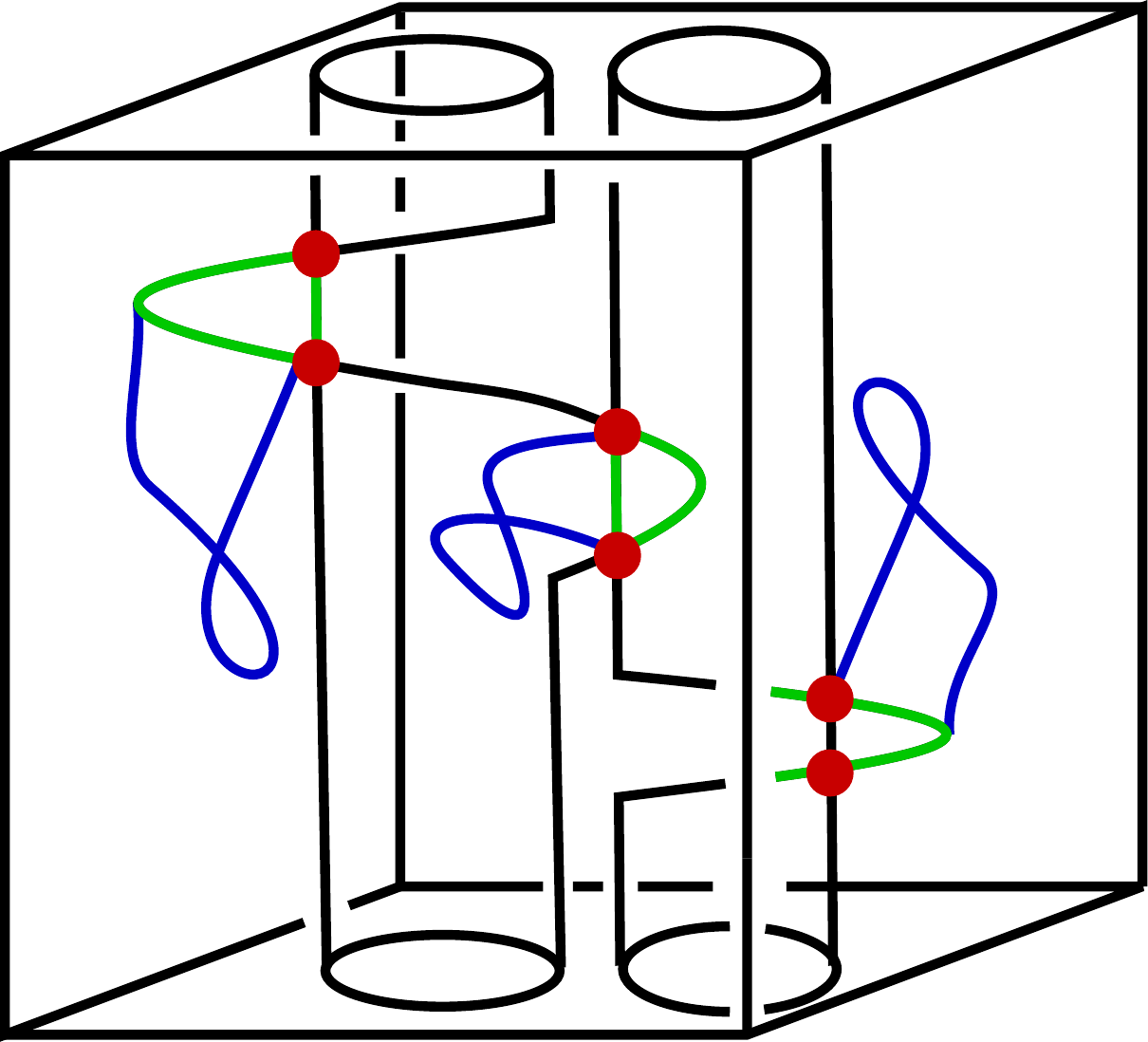}};
        \node at (0,0){\phantom{\includegraphics[width=.8\textwidth]{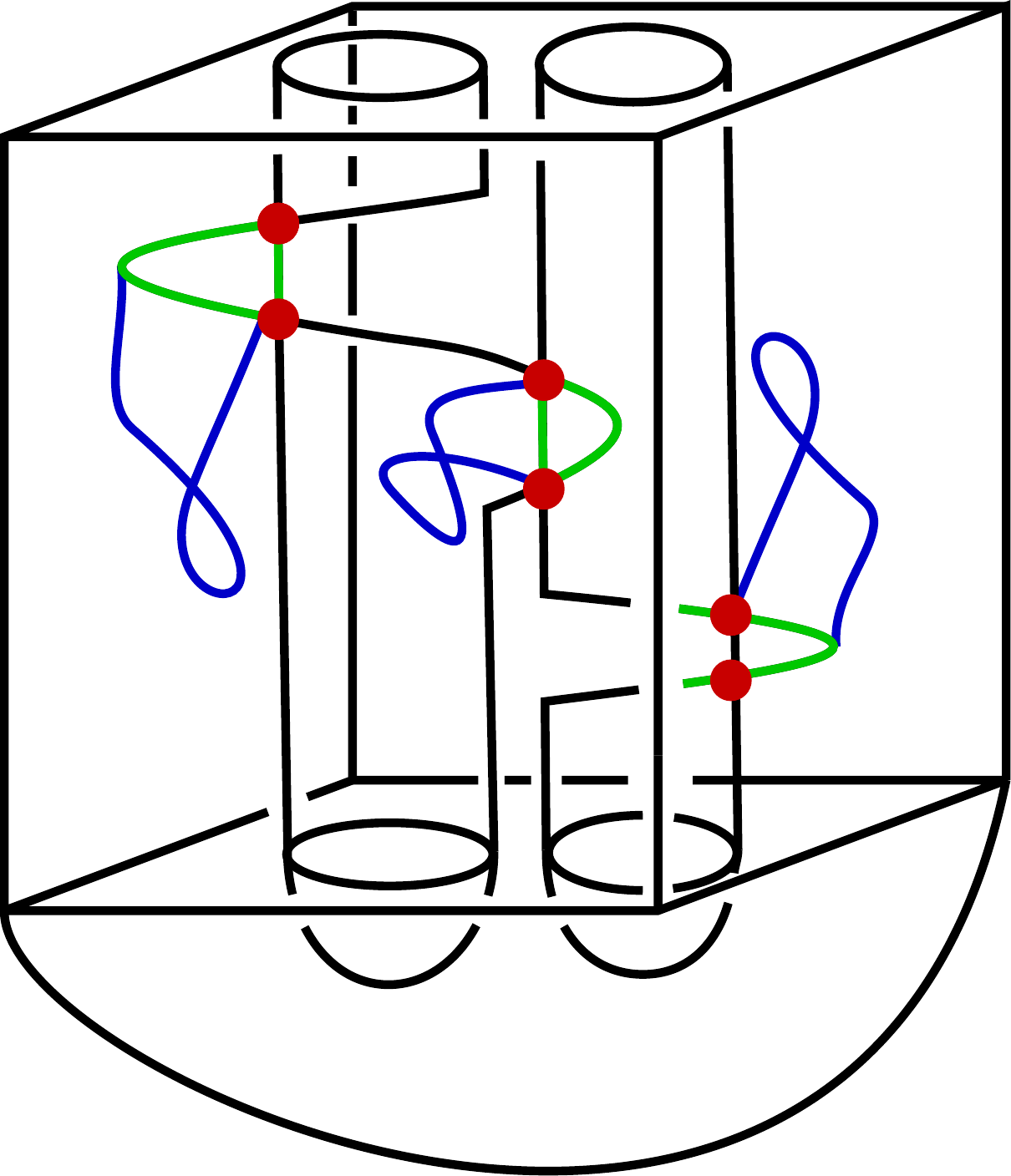}}};
        \node at (-1.6,2.4) {$Y$};
        \node at (1.6,-1.2) {$S^3$};
        \node at (0,3.0) {$K$};
        \draw[<-](.3,2.3)--(0,2.8);
        \draw[<-](-.3,2.3)--(0,2.8);
        \node at (0,-2.0) {$U$};
        \draw[<-](.3,-1.3)--(0,-1.8);
        \draw[<-](-.3,-1.3)--(0,-1.8);
         \end{tikzpicture}
     \caption{
     }
     \label{fig: WT conc}
     \end{subfigure}
     \begin{subfigure}[b]{0.3\textwidth}
         \centering
         \begin{tikzpicture}
        \node at (-1.6,2.4) {$Y$};
        \node at (0,3.0) {$K$};
        \draw[<-](.3,2.3)--(0,2.8);
        \draw[<-](-.3,2.3)--(0,2.8);

        \node at (0,0){\includegraphics[width=.8\textwidth]{ConcordanceToSlice2.pdf}};
         \end{tikzpicture}
     \caption{
     }\label{fig:construct slice disk}

     \end{subfigure}
     \begin{subfigure}[b]{0.3\textwidth}
         \begin{tikzpicture}
        \node at (0,0){\includegraphics[width=.8\textwidth]{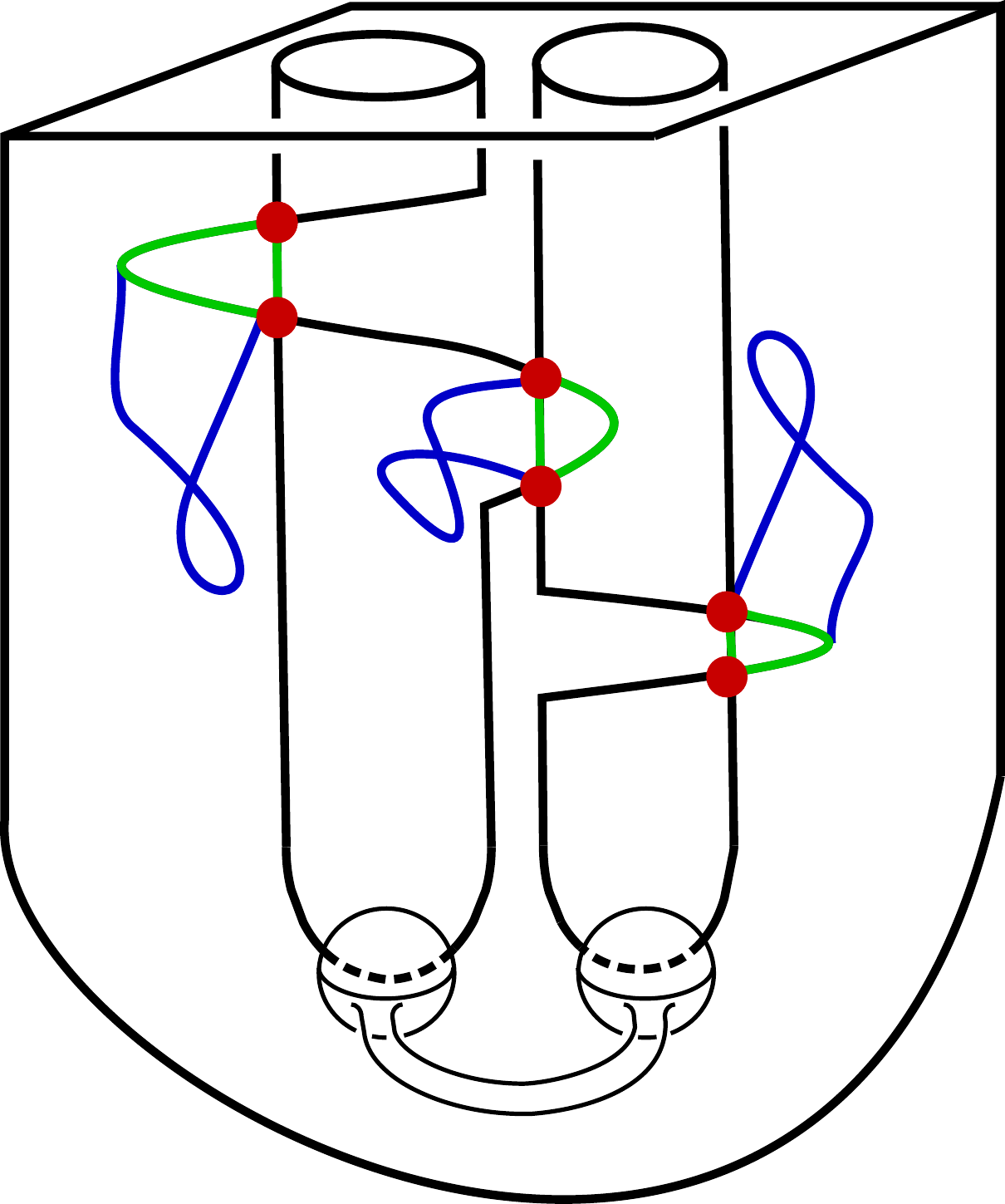}};
        
        \node at (-1.6,2.4) {$Y$};
        \node at (0,3.0) {$K$};
        \draw[<-](.3,2.4)--(0,2.8);
        \draw[<-](-.3,2.4)--(0,2.8);
        \node at (-1.6, -.2) {$D_1$};
        \draw[->](-1.3,-.2)--(-.9,-.2);
        \node at (1.6, 1.5) {$D_2$};
        \draw[->](1.4,1.5)--(1,1.5 );
        
        \node at (-1.4, -1) {$B_1$};
        \draw[->](-1.2,-1)--(-.8,-1.4);
        
        \node at (1.5, -1) {$B_2$};
        \draw[->](1.3,-1)--(.9,-1.4);

         \end{tikzpicture}
         \caption{
     }\label{fig: slice to concordance}
     \end{subfigure}
     
        \caption{Left: An immersed union of annuli $A$ bounded by $K\subseteq Y$ and $U\subseteq S^3$ extending to a height $1$ Whitney tower.  Middle: Capping $A$ with disks bounded by $U$ produces an immersed  union of disks extending to a height $1$ Whitney tower.  Right:  A immersed union of disks $D_1\cup D_2$ bounded by a link $K$ together with a pair of small 4-balls $B_1$ and $B_2$.  Tube $B_1$ and $B_2$ together to get a single 4-ball $B$.  Remove the interior of $B$ to arrive at a Whitney tower concordance from $K$ to the unlink.
         }
     \label{fig: concordance to slice}
\end{figure} 

Conversely, suppose that $\B$ is a contractible 4-manifold bounded by $Y$, that $D\subseteq \B$ is a framed immersed union of disks bounded by the 0-framing of $K$ and that $D$ extends to a height $h-1$ symmetric Whitney tower, $T$ in $\B$.  
For any $i$, let $K_i$ be the $i$'th component of $K$ and $D_i\subseteq D$ be the immersed disk bounded by $K_i$.  Pick a point $p_i$ interior to $D_i$, disjoint from all of the Whitney disks in $T$ and a small closed 4-ball neighborhood $B_i\subseteq \mathcal{B}$  of $p_i$.
  By taking $B_i$ small enough, we arrange that $\bdry B_i\cap D_i$ is an unknot and that $B_i\cap D_i = B_i\cap T$ is an embedded disk bounded by this unknot. 
   For $i=1,\dots, n-1$, let $\beta_i$ be an embedded arc in $\B$ running from $B_i$ to $B_{i+1}$.  A dimensionality argument allows us to arrange that $\beta_i$ is disjoint from $T$.  Let $B\subseteq \mathcal{B}$ be the 4-ball given by tubing together $B_1,\dots, B_n$ along $\beta_1,\dots, \beta_{n-1}$.  See Figure~\ref{fig: slice to concordance} for a schematic.

%
%
%

We now see a strong homology cobordism $\B\setminus \int{B}$ from $Y$ to $S^3$, in which $A=D\setminus \int{B}$ is an immersed union of annuli bounded by $K$ and $U$.  Here $\int{B}$ indicates the interior of $B$.   That $W$ is a strong homology cobordism follows from the fact that $\B$ is contractible.  We restrict the framing on $D$ to $D\cap B$.  Since $D\cap B$ is a collection of slice disks for $U$, this framing restricts to the $0$-framing on $U$.  Thus, the $0$-framing on $K\cup U$ extends over $A$.   Finally $A$ is the base surface for a height $h-1$ symmetric Whitney tower,  $T\setminus \int{B}$.  Thus, $K$ is height $h$ symmetric Whitney tower concordant to the unlink in $S^3$.
\end{proof}

\section{Symmetric relative Whitney towers}\label{sect: symm rel}

Symmetric Whitney towers are a restriction of a larger class of immersed 2-complexes in 4-manifolds, simply called Whitney towers.  In \cite{DOP22} the author along with Orson and Park define a variation of a Whitney tower called a {relative Whitney tower}.  Relative Whitney towers have two important properties.  First,  they exist much more readily than actual Whitney towers \cite[Lemma 5.4]{DOP22}.   Second,  they can be used to guide a homotopy of an immersed surface which changes the boundary in a controlled way to a new immersed surface which extends to a Whitney tower \cite[Lemma 5.5]{DOP22}.  In order to prove Theorem~\ref{thm:main} we   extend these two results to the setting of symmetric Whitney towers.  

Let $\psi:S\to W$ be an immersion of a surface (possibly with corners) into a 4-manifold.  Assume that $\psi$ is transverse to the boundary so that $\psi^{-1}[\bdry W] \subseteq \bdry S$ is a disjoint union of arcs and circles.  
Let $p$ be a double point in $\psi(S)$ with preimage $\psi^{-1}\{p\} = \{p_1, p_2\}$.  Let $\alpha_1, \alpha_2:[0,1]\into S$ be disjoint embedded arcs in $S$ running from $p_1$ and $p_2$ to points $q_1$ and $q_2$ interior to $\psi^{-1}(\bdry W)$, $\alpha_1$ and $\alpha_2$ interior to $S$ except for the endpoints at $q_1$ and $q_2$.  Let $\alpha_3$ be an arc in $\bdry W$ running from $\psi(q_1)$ to $\psi(q_2)$, which is otherwise disjoint from $\psi[S]$.  The concatenation $\psi(\alpha_1)*\alpha_3*\psi(\overline{\alpha_2})$ is a simple closed curve in $W$.  If this curve bounds an immersed disk $\Delta$ transverse to $S$ then $\Delta$ is called a \emph{relative Whitney disk} associated to the double point $p$.  The arc $\alpha_3$ is called the \emph{relative Whitney arc} of $\Delta$.  A schematic appears in Figure~\ref{fig: relWhitDisk}.

\begin{figure}[h]
     \centering
     \begin{subfigure}[b]{0.3\textwidth}
         \centering
         \begin{tikzpicture}
        \node at (0,.2){\includegraphics[width=.8\textwidth]{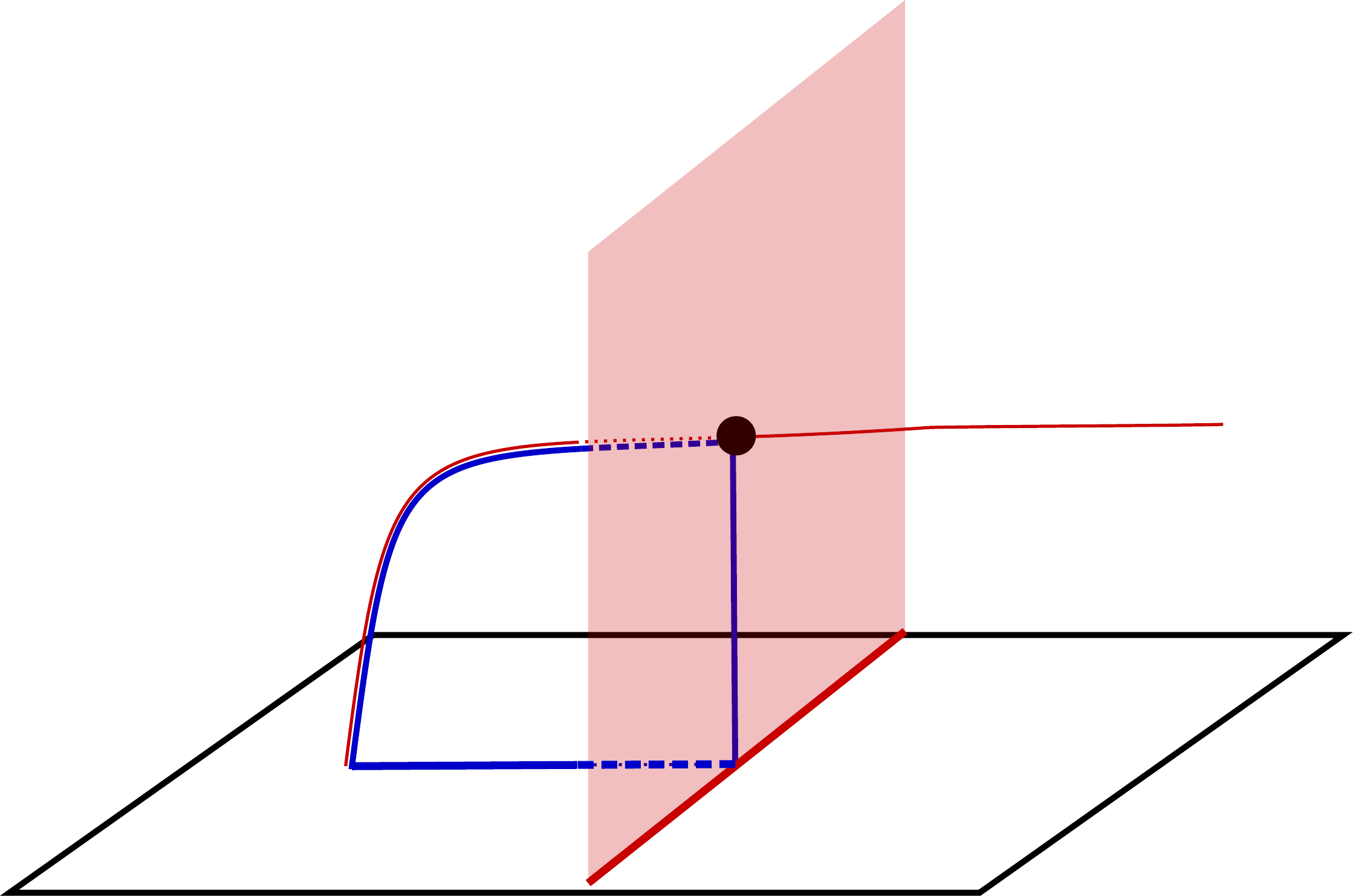}};
        \node at (1,-.6) {\tiny$\bdry W$};
        \node at (.2,.5) {\tiny$p$};
        \node at (-1,-.1) {\tiny$\alpha_1$};
        \node at (.4,-.1) {\tiny$\alpha_2$};
        \node at (-.6,-.9) {\tiny$\alpha_3$};
         \end{tikzpicture}
     \caption{
     }
     \label{fig: rel Whit arc}
     \end{subfigure}
     \begin{subfigure}[b]{0.3\textwidth}
         \begin{tikzpicture}
        \node at (0,.2){\includegraphics[width=.8\textwidth]{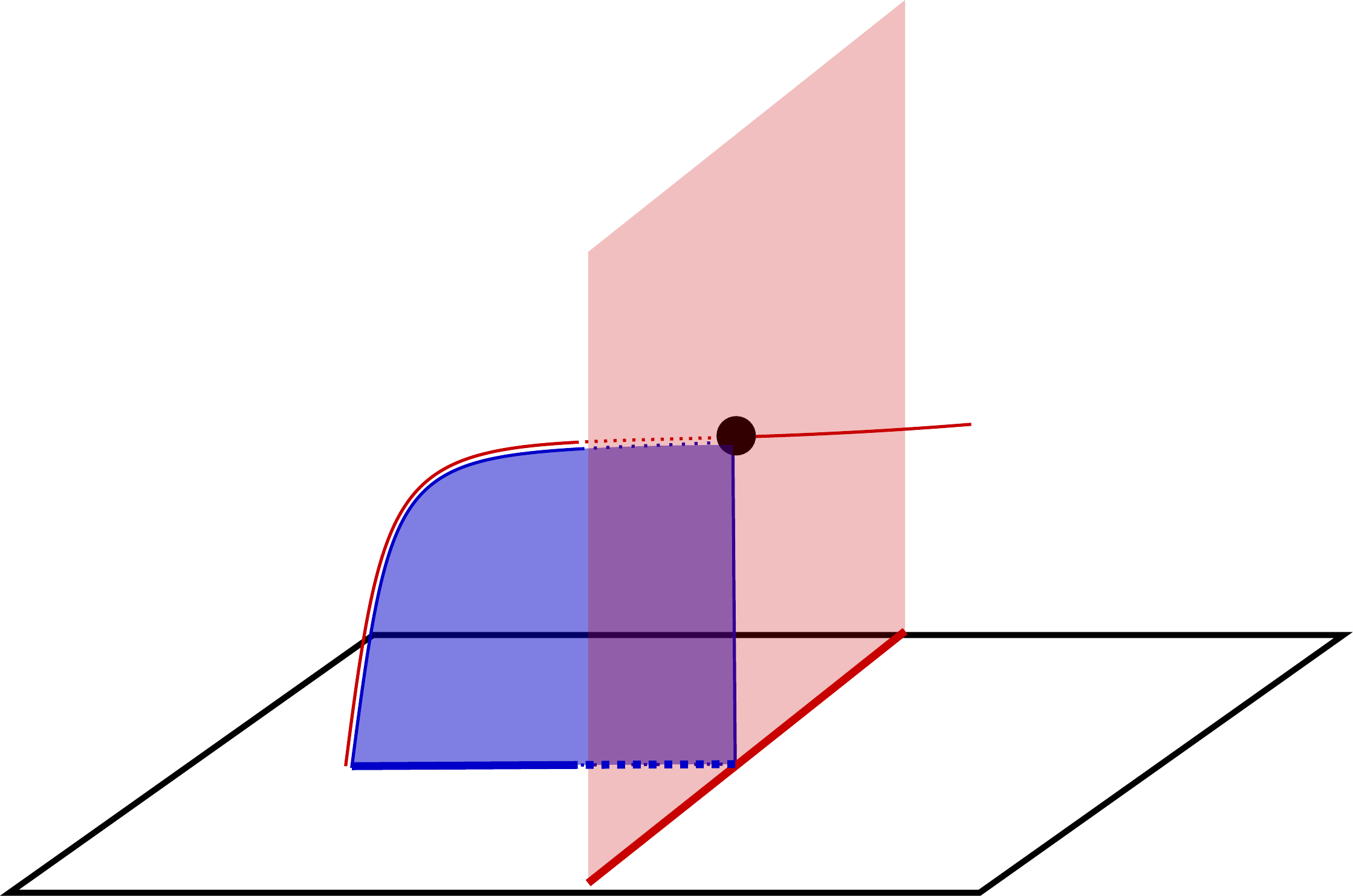}};
        \node at (1,-.6) {\tiny$\bdry W$};
        \node at (-.45,-.1) {\tiny$\Delta$};
         \end{tikzpicture}
     \caption{
      }
     \label{fig: rel Whit disk}
     \end{subfigure}
     \begin{subfigure}[b]{0.3\textwidth}
         \centering
         \begin{tikzpicture}
        \node at (0,.2){\includegraphics[width=.8\textwidth]{relWhitMove.pdf}};
        \node at (1,-.6) {\tiny$\bdry W$};
         \end{tikzpicture}
     \caption{}
     \label{fig: rel Whit move}
     \end{subfigure}
        \caption{Left: A double point $p$ in an immersed surface together with arcs $\alpha_1$ and $\alpha_2$ running from $p$ to points in $\bdry W \cap S$ and a third arc $\alpha_3$ connecting the endpoints of $\alpha_1$ and $\alpha_2$.  Center: $\alpha_1*\alpha_3*\overline{\alpha_2}$ bounding an immersed disk, called a relative Whitney disk.  Right: The relative Whitney trick along $\Delta$ removes the double point at $p$.  }
        \label{fig: relWhitDisk}
\end{figure} 

The reader will notice that while the definition of  a Whitney disk requires some discussion of trivialization of normal bundles, the definition of a relative Whitney disk does not.  This is because a trivialization of a vector bundle on an arc on the boundary of a of a disk automatically extends to a trivialization over the disk.   Thus, if $N$ is the normal bundle of $\Delta$ and $N|_{\psi(\overline{\alpha_2})*\psi(\alpha_1)}$ is its restriction to $\psi(\overline{\alpha_2})*\psi(\alpha_1)$ then any trivialization of $N|_{\psi(\overline{\alpha_2})*\psi(\alpha_1)}$ automatically extends over $N$.   We construct a trivialization of $N|_{\psi(\overline{\alpha_2})*\psi(\alpha_1)}$ as follows.  At any point in $\psi(\alpha_i)$ let $\vec u_i$ be a tangent vector to $\psi(S)$ which is normal to $\psi(\alpha_i)$ and let $\vec v_i$ be normal to $\Delta$ and to $\psi(S)$.  These may be chosen to vary continuously and so that at the double point $p$, $\vec u_1=\vec v_2$ and $\vec v_2=\vec u_1$.  These two normal vectors together give a trivialization of $N|_{\psi(\overline{\alpha_2})*\psi(\alpha_1)}$ which we extend to give a framing of $\Delta$.  Notice that this imposes a framing on the relative Whitney arc, $\alpha_3$.  See \cite[Section 2.2]{DOP22} for a more detailed discussion. 


\begin{definition}
Let $S$ be an immersed surface in a 4-manifold.  A \emph{symmetric relative Whitney tower} of height $h\in \N$ based on $S$ is a sequence $T_0$, $T_1$,\dots, $T_h$ of immersed surfaces such that 
\begin{itemize}
\item $T_0=S$, 
\item for $j\ge 1$, $T_j$ is a collection of transverse relative Whitney disks with disjoint boundaries consisting of one relative Whitney disk associated to each double point in $T_{j-1}$, and 
\item for $j>i$,  $T_j$ has interior disjoint from $T_i$.
\end{itemize}  We  call $T_i$ the $i$'th stage of $T$, and relative Whitney disks in $T_i$ are called height $i$.  

\end{definition}


\begin{figure}[h]
     \centering
     \begin{subfigure}[b]{0.3\textwidth}
         \centering
         \begin{tikzpicture}
        \node at (0,.2){\includegraphics[width=.8\textwidth]{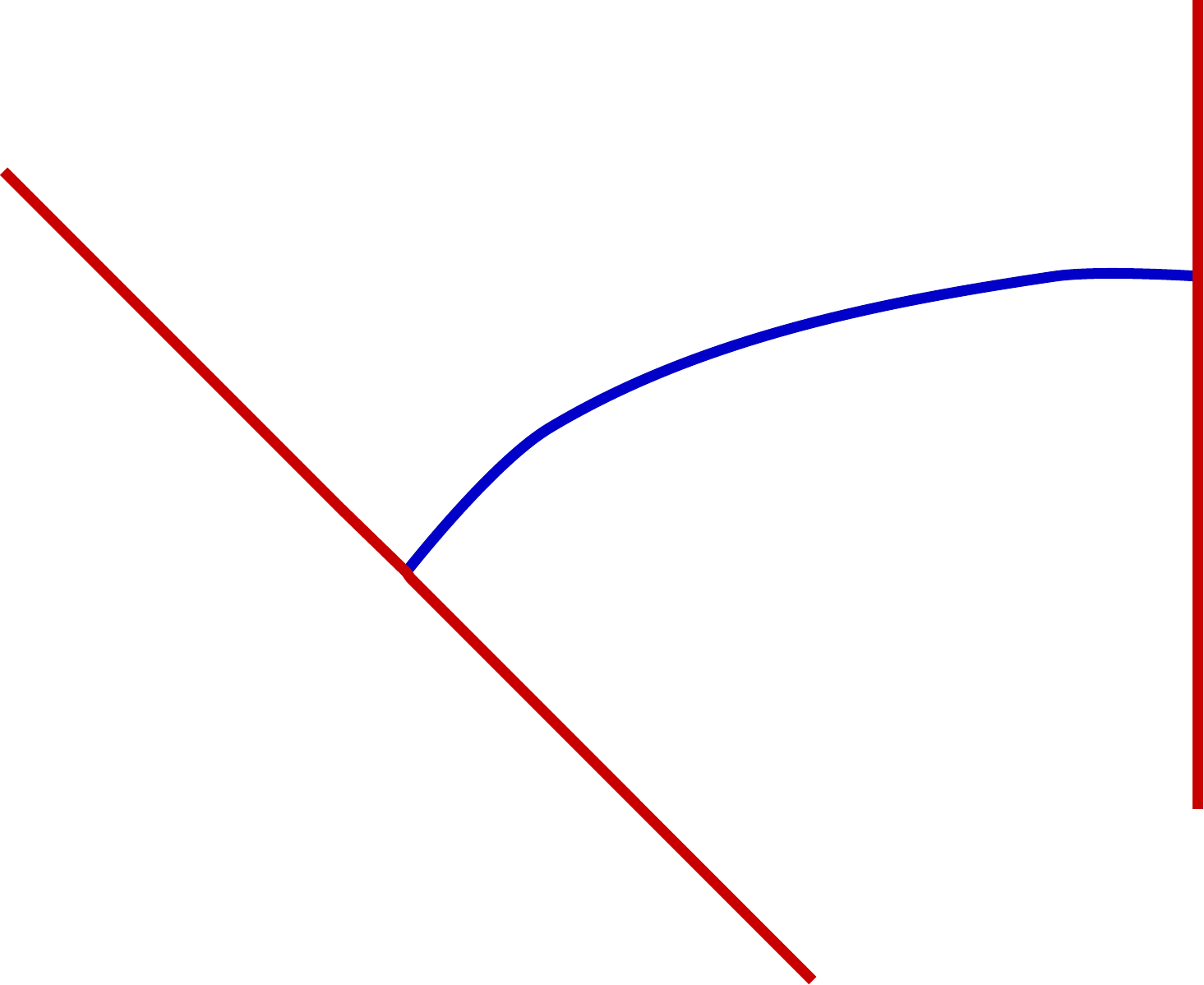}};
        \node at (.3,.4) {$\alpha_3$};
        \node at (-.8,-1) {$S\cap \bdry W$};
        \node at (2,-1) {$S\cap \bdry W$};
         \end{tikzpicture}
     \caption{
     }
     \label{fig: rel Whit arc in bdry}
     \end{subfigure}
     \begin{subfigure}[b]{0.3\textwidth}
         \centering
         \begin{tikzpicture}
        \node at (0,.2){\includegraphics[width=.8\textwidth]{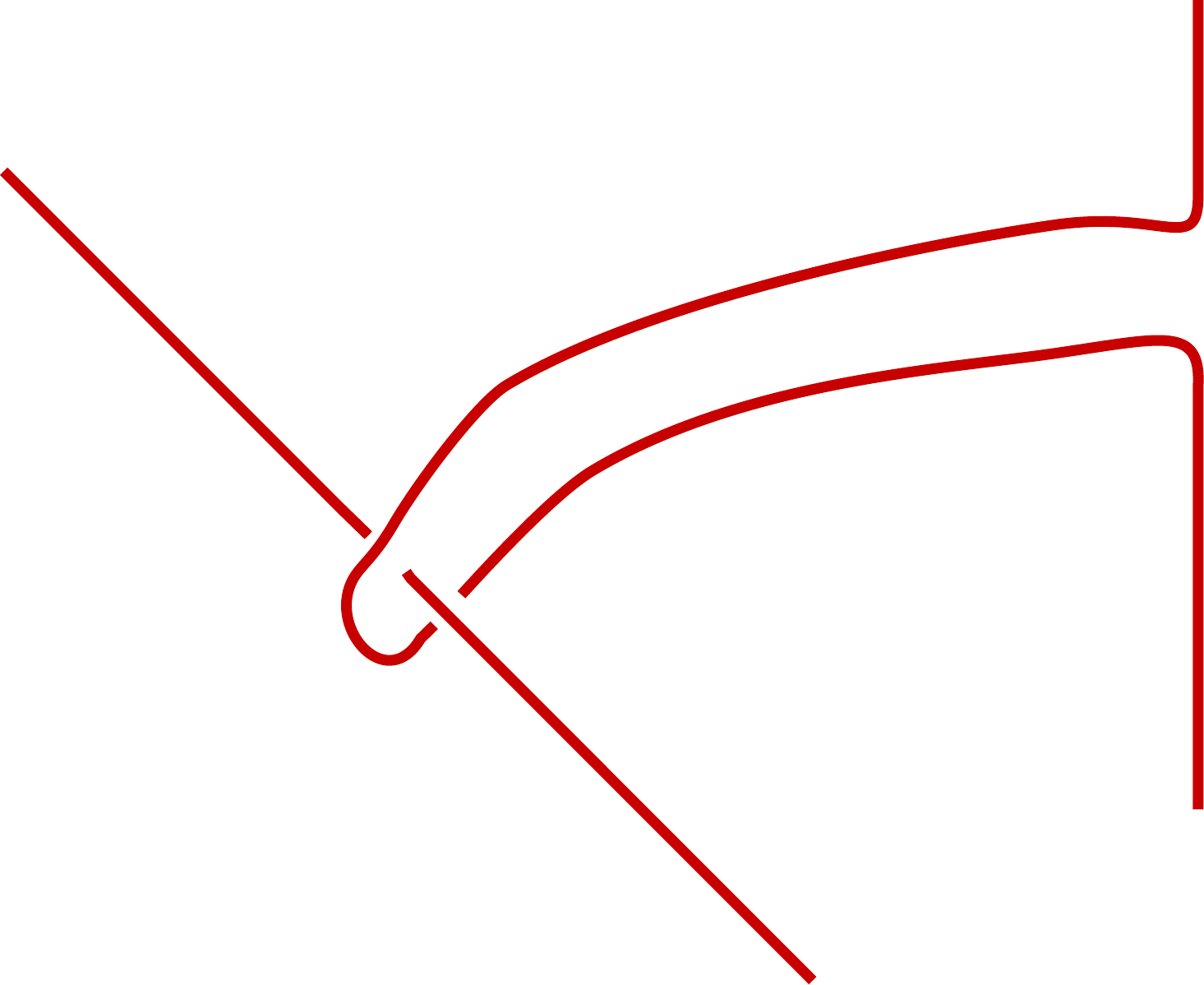}};
         \end{tikzpicture}
     \caption{
     }
     \label{fig: rel Whit move in bdry}
     \end{subfigure}
     \begin{subfigure}[b]{0.3\textwidth}
         \centering
         \begin{tikzpicture}
        \node at (0,.2){\includegraphics[width=.8\textwidth]{partial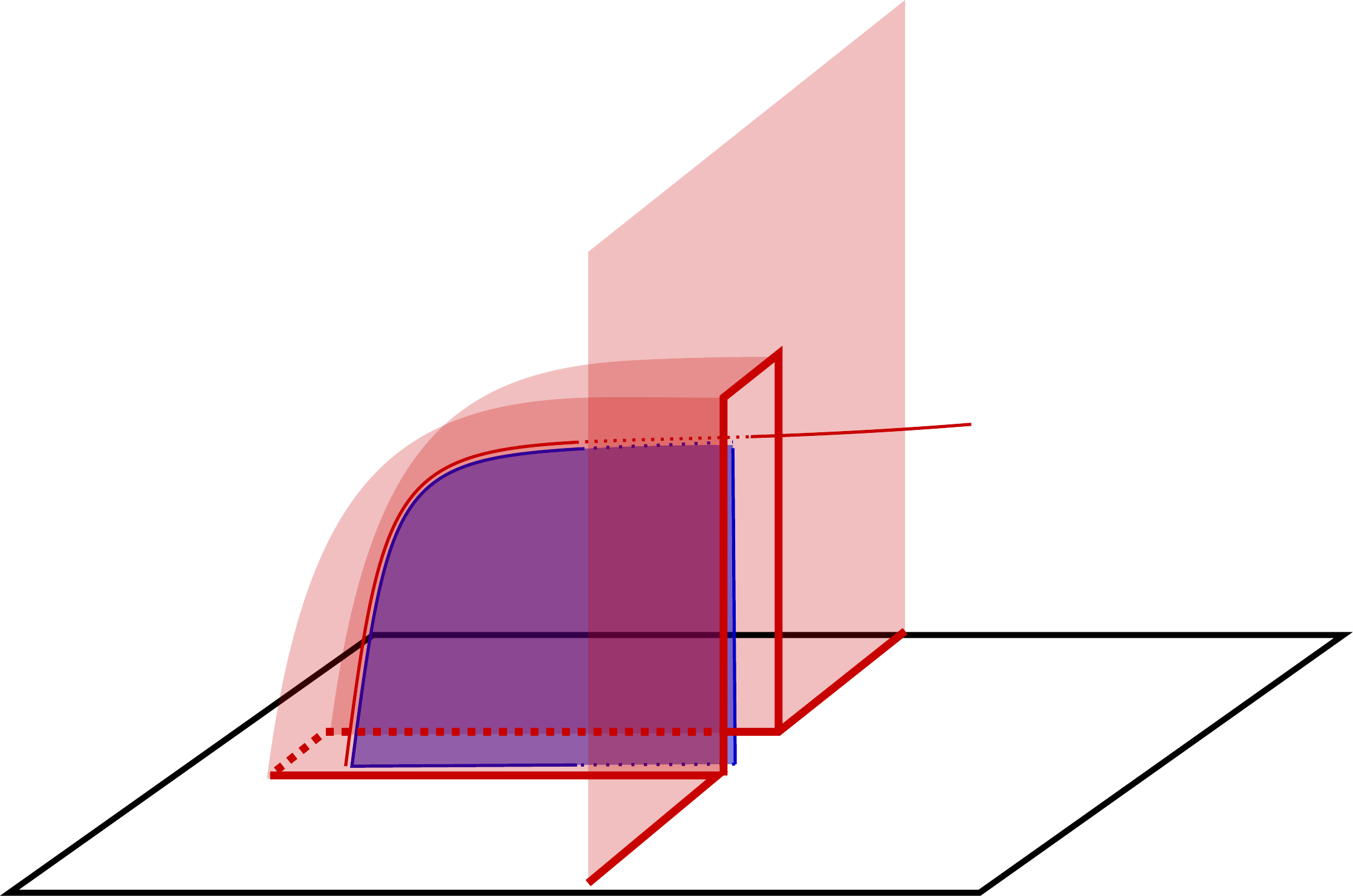}};
        \node at (.2,.5) {\tiny$p$};
        \node at (-1,-.3) {\tiny$q$};
         \end{tikzpicture}
     \caption{
     }
     \label{fig: partial rel Whit move}
     \end{subfigure}
        \caption{Left: Two arcs in $S\cap \bdry W$ connected by the relative Whitney arc $\alpha_3$ of a relative Whitney disk.  Center: The relative Whitney move changes $S\cap \bdry W$ by sliding along $\alpha_3$ over a meridian.  Right: The partial relative Whitney trick introduces a new point of intersection and  replaces a relative Whitney disk with a Whitney disk. }
        \label{fig: rel Whit }
\end{figure} 

An important application of relative Whitney disks is the so-called relative Whitney trick, schematically described in Figure~\ref{fig: rel Whit move} and described in detail in \cite[Subsection 2.2]{DOP22}.  Another is the \emph{partial relative Whitney trick} of Figure~\ref{fig: partial rel Whit move} \cite[Proof of Lemma 5.5]{DOP22}.  We give the properties of the partial relative Whitney trick.  Let $p$ be a double point in an immersed surface $A$. Let $\Delta$ be a relative Whitney disk associated with $p$ and $\alpha\subseteq \bdry W$ be its relative Whitney arc.  The partial relative Whitney trick
\begin{itemize}
\item changes $A$ by a homotopy which is constant away from a 
 small neighborhood of $\alpha$,
\item introduces exactly one new double point $q$ to $A$,
\item replaces $\Delta$ with a Whitney disk pairing the intersection points $p$ and $q$, and 
\item changes $\bdry A$ by a homotopy pushing an arc in $\bdry A$ along the framed relative Whitney arc over a meridian of $\bdry A$, as in Figure~\ref{fig: rel Whit move in bdry}.
\end{itemize}

Let $T$ be a symmetric relative Whitney tower.  By performing the partial relative Whitney move on all of the relative Whitney disks of $T$ one gets a symmetric Whitney tower.

\begin{lemma}[Compare to Lemma 5.5 of \cite{DOP22}]\label{lem: relative Whitney tower to Whitney tower}
Let $W$ be a 4-manifold and $T$ be a height $h$ symmetric relative Whitney tower in $W$.  If $Y$ is a connected submanifold of $\bdry W$ and contains all of the relative Whitney arcs of $T$, then there exists a symmetric relative Whitney tower, $T'$ of height $h$ such that the height 0 surfaces of $T$ and $T'$ differ by a homotopy which is constant outside of a small neighborhood of $Y$. 
\end{lemma}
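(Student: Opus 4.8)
The plan is to build $T'$ by performing the partial relative Whitney trick — recalled in the paragraph preceding the lemma and carried out in detail in \cite[Proof of Lemma~5.5]{DOP22} — simultaneously on \emph{every} relative Whitney disk of $T$, at every stage, and then to observe that the double points these moves create are exactly the ones needed for the resulting immersed $2$-complex to be a genuine symmetric Whitney tower of the same height whose base surface has been modified only near $Y$. One could equally well argue by downward induction on the stage, but doing all the moves at once makes the combinatorics of double points transparent.

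First I would put the relative Whitney arcs of $T$ into general position. Within a single stage $T_j$ these arcs are already pairwise disjoint, by definition of a symmetric relative Whitney tower, but arcs belonging to disks of different stages may meet. Since each relative Whitney arc is an embedded arc in the $3$-manifold $\bdry W$, and each surface of $T$ meets $\bdry W$ in a set of dimension at most $1$, a small isotopy supported in a connected neighborhood $\nu(Y)\subseteq\bdry W$ of $Y$ makes all relative Whitney arcs of $T$ pairwise disjoint and, for each relative Whitney disk $\Delta\in T_j$ with arc $\alpha_\Delta$, makes a chosen small neighborhood $\nu(\alpha_\Delta)\subseteq W$ of $\alpha_\Delta$ meet $T$ only in $\Delta$ itself and in the stage $T_{j-1}$ near the two endpoints of $\alpha_\Delta$; in particular the neighborhoods $\nu(\alpha_\Delta)$ for distinct $\Delta$ can be taken disjoint. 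Isotoping an arc $\alpha_\Delta$ is carried out by an ambient isotopy dragging $\Delta$ along, so all of this costs only a homotopy of $S$ supported near $Y$.

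Next I would perform the partial relative Whitney trick on every relative Whitney disk of $T$ at once, each move supported inside its own $\nu(\alpha_\Delta)$. For $\Delta\in T_j$ associated to a double point $p_\Delta$ of $T_{j-1}$, the trick replaces $\Delta$ by a framed Whitney disk $\Delta'$ pairing $p_\Delta$ with exactly one new double point $q_\Delta$ of $T_{j-1}$, alters $T_{j-1}$ and $\bdry T_{j-1}$ only inside $\nu(\alpha_\Delta)$, and by general position creates no further intersections. Set $T'_0:=S'$ (the homotoped base surface) and $T'_k:=\{\Delta' : \Delta\in T_k\}$ for $1\le k\le h$. The key observation is the resulting double-point count: for $0\le k\le h-1$, the double points of $T'_k$ are precisely $\{p_\Delta : \Delta\in T_{k+1}\}\cup\{q_\Delta : \Delta\in T_{k+1}\}$, and $T'_{k+1}$ is a collection of transverse framed immersed Whitney disks with pairwise disjoint boundaries pairing $p_\Delta$ with $q_\Delta$ for each $\Delta\in T_{k+1}$; thus $T'_{k+1}$ pairs up \emph{all} double points of $T'_k$, while the top stage $T'_h$ is allowed to carry unpaired double points. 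For $j>i$ the interior of $T'_j$ is disjoint from $T'_i$: away from the $\nu(\alpha_\Delta)$ this is inherited from $T$, and inside $\nu(\alpha_\Delta)$ only $\Delta'\subseteq T'_j$ and $T'_{j-1}$ are present, with $\int{\Delta'}$ disjoint from $T'_{j-1}$ by the local model for the partial relative Whitney trick. Hence $T'=(T'_0,\dots,T'_h)$ is a symmetric Whitney tower of height $h$, and $S=T_0$ and $S'=T'_0$ differ by a homotopy constant outside the union of the neighborhoods $\nu(\alpha_\Delta)$ with $\Delta\in T_1$ and the neighborhood used for the preliminary isotopy, all of which lie inside a prescribed neighborhood of $Y$.

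The step I expect to be the main obstacle is this localization: making rigorous that the partial relative Whitney tricks for the different disks of $T$ can genuinely be performed in disjoint regions, each touching only the stage-surface beneath the corresponding Whitney disk, so that no uncontrolled double points appear and the disjointness-of-interiors condition of Definition~\ref{defn:Symm WT} survives. This is exactly where the hypothesis that every relative Whitney arc lies in the connected set $Y$ is used, together with a general-position argument inside the $3$-manifold $\bdry W$ and a careful choice of the neighborhoods $\nu(\alpha_\Delta)$. A secondary point to verify is that the partial relative Whitney trick preserves the induced framings at each stage, which follows from the discussion of framings of relative Whitney disks given before the lemma and in \cite[Section~2.2]{DOP22}.
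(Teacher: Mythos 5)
The main obstacle you flag at the end cannot be resolved by general position, and this is a genuine gap. For a relative Whitney disk $\Delta\in T_j$ with $j\ge 2$, the relative Whitney arc $\alpha_\Delta$ runs between two points $\psi(q_1),\psi(q_2)$ that lie, by definition, in $T_{j-1}\cap\bdry W$. But when $j-1\ge 1$, the set $T_{j-1}\cap\bdry W$ is exactly the union of the relative Whitney arcs of the stage-$(j-1)$ disks: the only part of the boundary of a relative Whitney disk that lies in $\bdry W$ is its relative Whitney arc. Hence the endpoints of $\alpha_\Delta$ necessarily sit on arcs $\alpha_{\Delta'}$ with $\Delta'\in T_{j-1}$. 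This is a structural feature of a relative Whitney tower, not a transversality defect, so no isotopy makes ``all relative Whitney arcs of $T$ pairwise disjoint,'' and no choice of neighborhoods $\nu(\alpha_\Delta)$ can be pairwise disjoint across consecutive stages, since both $\nu(\alpha_\Delta)$ and $\nu(\alpha_{\Delta'})$ must contain a neighborhood of $\psi(q_1)$. Concretely, the partial relative Whitney trick on $\Delta$ pushes a piece of $\Delta'$ near $\psi(q_1)\in\alpha_{\Delta'}$ and therefore changes $\alpha_{\Delta'}$, which is precisely the datum you need in order to carry out the trick on $\Delta'$; performing all tricks ``at once'' along the original arcs does not produce a well-defined immersed $2$-complex.

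The paper resolves this by the very alternative you mention but set aside: downward induction on the stage. Within a single stage $T_j$ the relative Whitney disks are required by the definition of a symmetric relative Whitney tower to have pairwise disjoint boundaries, so their relative Whitney arcs \emph{are} disjoint and all height-$h$ tricks can be done simultaneously. This produces a modified $T_{h-1}'$ whose new relative Whitney arcs still lie in a neighborhood of $Y$; one then does all tricks on $T_{h-1}'$, and so on down to stage $1$. Each round is confined to a single stage where the disjointness you want is automatic, and the interaction between consecutive stages is absorbed by always using the freshly determined arcs. If you replace your simultaneous schedule with this stage-by-stage one, the rest of your bookkeeping of double points goes through and recovers the paper's argument.
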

\begin{proof}
As the proof is identical to that of \cite[Lemma 5.5]{DOP22}, we give only a summary.  
Start with a symmetric relative Whitney tower $T$ of height $h$.  Perform the partial relative Whitney move along each of the height $h$ relative Whitney disks in $T$.  These moves change $T_{h-1}$ by a homotopy to $T_{h-1}'$ (a new collection of relative Whitney disks if $h-1>0$).  We now have $T_0\cup T_1\cup\dots\cup T_{h-2}\cup T_{h-1}'$ is a height $h-1$ relative Whitney tower and $T_{h-1}'$ extends to a height $1$ symmetric Whitney tower.

Now do the same along all height $h-1$ relative Whitney disks, so that $T_0\cup T_1\cup\dots\cup T_{h-3}\cup T_{h-2}'$ is a height $h-2$ relative Whitney tower and $T_{h-2}'$ extends to a height $2$ Whitney tower.  Iterate until you have performed the relative Whitney trick along every relative Whitney disk.  What results is a height $0$ relative Whitney tower (i.e.~ an immersed surface) $T_0'$ which extends to a height $h$ Whitney tower.  Moreover $T_0'$ differs from $T_0$ by a homotopy supported in a small neighborhood of the relative Whitney arcs of $T$.  As $Y$ contains all of the relative Whitney arcs, this completes the proof.
\end{proof}

\section{Constructing symmetric relative Whitney towers: The proof of Theorem~\ref{thm:main}.  }\label{sect: proof of main}

 Let $S$ be an immersed surface and $\Delta$ be a Whitney disk pairing two double points of $S$.  If $\Delta$ is embedded and has interior disjoint from $S$ then the Whitney trick can be used to remove these points of intersection.  The \emph{finger move} is an inverse to the Whitney trick.  Let $S\subseteq W$ be an immersed surface in a 4-manifold.  Let $\beta$ be framed arc running from a point interior to $S$ to another point interior to $S$.  A schematic for the result of changing $S$ by a finger move along $\beta$ is depicted in Figure~\ref{fig: finger move}.  In the result of the finger move there are two new intersection points paired by an embedded Whitney disk.  Performing the Whitney move along this disk results in a surface isotopic to $S$.  See also \cite[Section 11.2]{DETBook}.  
 
 \begin{figure}[h]
     \centering\begin{subfigure}[b]{0.4\textwidth}
         \centering
         \begin{tikzpicture}
        \node at (0,.2){\includegraphics[width=.4\textwidth]{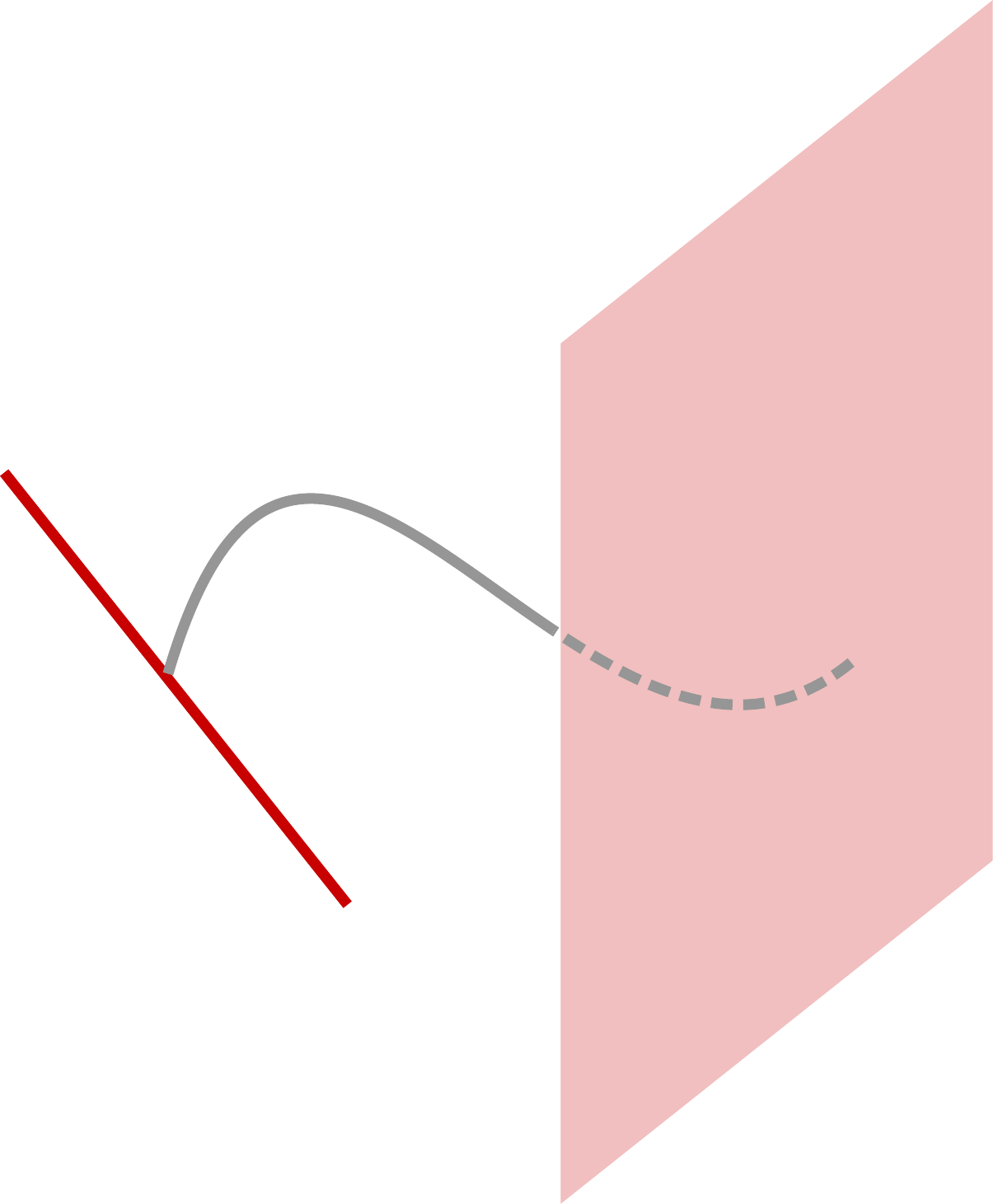}};
        \node[red] at (-.75,-.6) {$S$};
        \node[red] at (.35,1.25) {$S$};
        \node at (-.3,.15) {$\beta$};
         \end{tikzpicture}
     \end{subfigure}
     \begin{subfigure}[b]{0.4\textwidth}
         \centering
         \begin{tikzpicture}
        \node at (0,.2){\includegraphics[width=.4\textwidth]{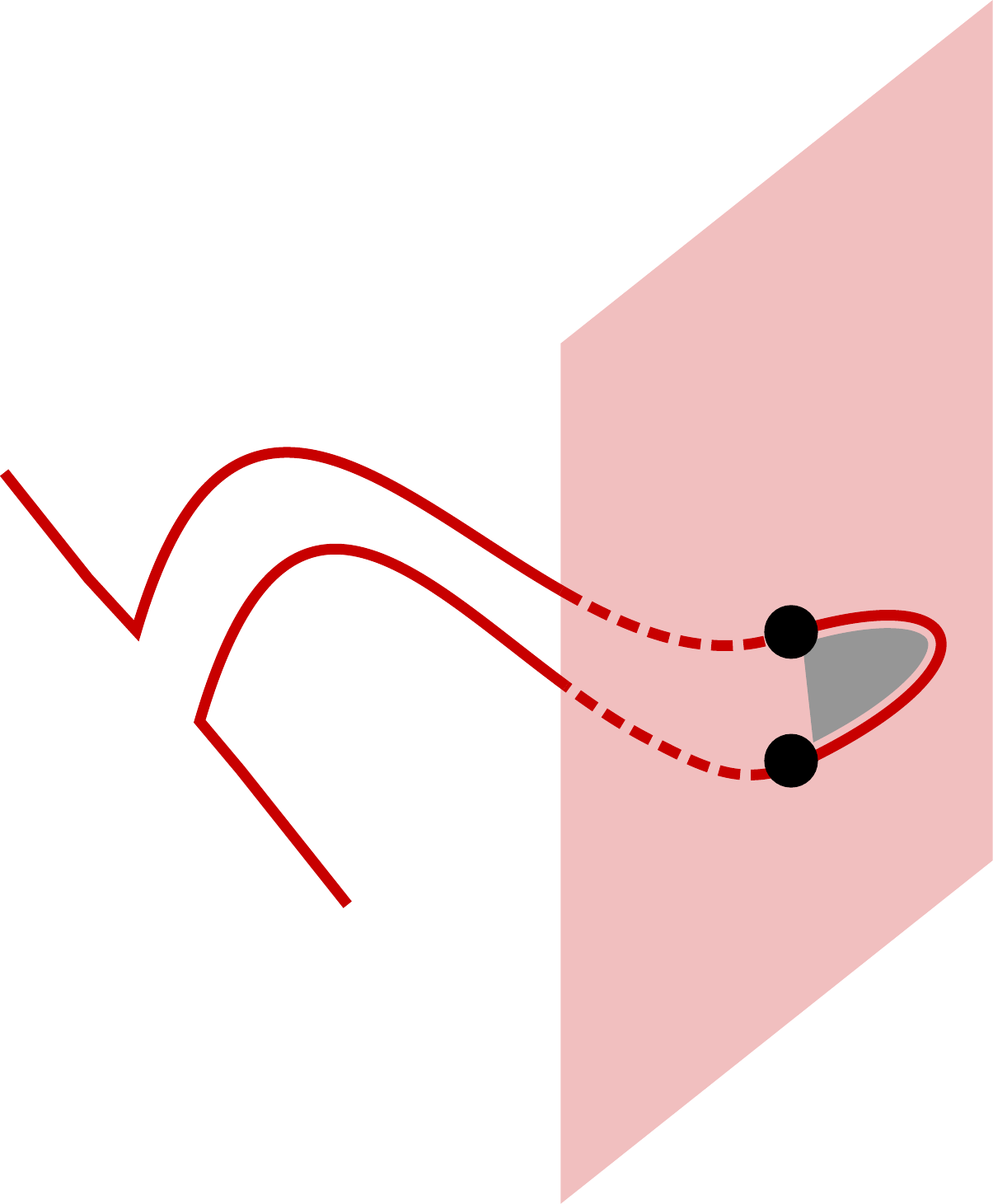}};
        \node[red] at (-.8,-.6) {$S'$};
        \node[red] at (.35,1.25) {$S'$};
         \end{tikzpicture}
     \end{subfigure}
        \caption{Left: An arc $\beta\subseteq W$ between two points in immersed surface $S\subseteq W$.  Right: The result of the finger move $\beta$ along with an embedded Whitney disk which undoes the finger move.  
         }
        \label{fig: finger move}
\end{figure} 

 Lemma~\ref{lem: relative Whitney tower to Whitney tower}
 gives a path to the proof of Theorem~\ref{thm:main}.  Let $K$ be a link in a homology sphere $Y$.  By \cite[Corollary 9.3]{FQ} $Y$ bounds a simply connected homology ball.  By removing a small open 4-ball from its interior one gets a simply connected homology cobordism $W$ from $Y$ to $S^3$.  Since $W$ is simply connected, there is a free homotopy (a union of immersed annuli) in $W$ from $K$ to the unlink $U$ in $S^3$.  Call this immersed union of annuli $A$.  If we can arrange that $A$ extends to a height $h$ symmetric relative Whitney tower, then we will be able to use Lemma~\ref{lem: relative Whitney tower to Whitney tower} to replace it with a Whitney tower concordance from $K$ to some link in $S^3$.   The first step is arranging that $\pi_1(S^3\setminus \nu(U))\onto \pi_1(W\setminus \nu(A)) $ is surjective.

 \begin{lemma}\label{lem: surface pi1 onto}
Let $W$ be a 4-manifold with boundary.  Let $S$ be an immersed surface in $W$.  Let $Y\subseteq \bdry W$ be a connected 3-manifold.  Assume that every  component of $S$ has nonempty intersection with $Y$ and that the inclusion induced map $\pi_1(Y)\onto \pi_1(W)$ is onto.  

There is an immersed surface $S'$ in $W$ satisfying that 
the inclusion induced map $\pi_1(Y\setminus \nu(S'))\onto \pi_1(W\setminus \nu(S'))$ is onto and that $S'$ differs from $S$ by sequence of finger moves.  In particular $\bdry S = \bdry S'$.
 \end{lemma}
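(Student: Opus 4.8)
The plan is to kill the obstruction to surjectivity of $\pi_1(Y\setminus\nu(S'))\to\pi_1(W\setminus\nu(S'))$ one generator at a time by introducing new intersections of $S$ with itself via finger moves. Since $\pi_1(Y)\to\pi_1(W)$ is onto, a Mayer–Vietoris / van Kampen comparison shows that the only way $\pi_1(Y\setminus\nu(S'))\to\pi_1(W\setminus\nu(S'))$ can fail to be onto is through loops in $W\setminus\nu(S)$ that, when pushed into $W$, are homotopic to something coming from $Y$ but whose homotopy must cross $S$; equivalently, the meridians of $S$ (which normally generate the kernel of $\pi_1(W\setminus\nu(S))\to\pi_1(W)$) need not be in the image of $\pi_1(Y\setminus\nu(S))$. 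So it suffices to arrange, after the finger moves, that each meridian of $S'$ is homotopic in $W\setminus\nu(S')$ to a loop lying in $Y\setminus\nu(S')$.

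First I would fix a finite generating set $g_1,\dots,g_m$ of $\pi_1(W\setminus\nu(S))$; since $\pi_1(Y)\to\pi_1(W)$ is onto, each $g_j$ maps to a loop in $W$ that is homotopic to the image of some loop $\gamma_j$ in $Y$, and after a general-position perturbation $\gamma_j$ can be taken to miss $\nu(S)$, so $\gamma_j$ represents a class in $\pi_1(Y\setminus\nu(S))$. The difference $g_j$ versus $\gamma_j$ (as elements of $\pi_1(W\setminus\nu(S))$) lies in the kernel of the map to $\pi_1(W)$, hence is a product of conjugates of meridians $\mu_1,\dots,\mu_r$ of the components of $S$. Thus it is enough to show that after finitely many finger moves every meridian of $S'$ lies in the image of $\pi_1(Y\setminus\nu(S'))\to\pi_1(W\setminus\nu(S'))$. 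For a given component $S_i$ and its meridian $\mu_i$: since every component of $S$ meets $Y$, pick a point $x_i\in S_i\cap Y$ and a small meridian of $S_i$ based near $x_i$; this small meridian already lies in $Y\setminus\nu(S)$, and it is isotopic in $W\setminus\nu(S)$ to $\mu_i$ by sliding along $S_i$ — \emph{except} that the sliding arc on $S_i$ may hit other sheets of $S$. Each such crossing is exactly where a finger move is used: performing a finger move of $S$ along an arc in $W$ that runs parallel to the offending sliding path drags a tube of $S_i$ down to $Y$ in a way that makes $\mu_i$ visibly conjugate, in the complement, to the small meridian near $x_i\in Y$. Carrying this out for each $i$ gives the surface $S'$; finger moves are supported in a small ball and do not change $\bdry S$, so $\bdry S'=\bdry S$.

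The main obstacle I anticipate is the bookkeeping in the previous paragraph: making precise that a meridian of $S_i$, after being slid along $S_i$ toward the boundary, becomes (a conjugate of) a meridian based at a point of $Y$, and that the ``corrections'' needed when the sliding arc crosses other sheets of $S$ are implemented by genuine finger moves rather than by some more drastic modification. One clean way to package this is to observe that a finger move of $S$ along an arc $\beta$ from a point of $S_i$ to a point of $S$ changes $\pi_1(W\setminus\nu(S))$ only by possibly adding relations (it is a homotopy of $S$ through immersions, hence $W\setminus\nu(S')$ is obtained from $W\setminus\nu(S)$ by removing a neighborhood of $\beta$ and regluing, which at the level of $\pi_1$ surjects), while it does let us route the meridian of $S_i$ through $Y$ once the tube reaches the boundary; iterating over a finite list of generators terminates because each step reduces the number of generators not yet known to lie in the image. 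Once all meridians of $S'$ are in the image of $\pi_1(Y\setminus\nu(S'))$, van Kampen (applied to $W = \nu(S')\cup(W\setminus\nu(S'))$, using that $\pi_1(Y)\to\pi_1(W)$ is onto and that the meridians normally generate the kernel of $\pi_1(W\setminus\nu(S'))\to\pi_1(W)$) gives that $\pi_1(Y\setminus\nu(S'))\to\pi_1(W\setminus\nu(S'))$ is onto, completing the proof.
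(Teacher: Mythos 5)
Your opening setup — fix a finite generating set $g_1,\dots,g_m$ of $\pi_1(W\setminus\nu(S))$, pair each $g_j$ with a loop $\gamma_j$ in $Y\setminus\nu(S)$ carrying the same class of $\pi_1(W)$, and note that $g_j\gamma_j^{-1}$ lies in the normal closure of the meridians — matches the paper exactly. After that, though, the two arguments diverge, and I think yours has two genuine gaps.

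First, the reduction ``it suffices to show each meridian of $S'$ lies in the image'' is not valid if you mean one (based) meridian per component. Group-theoretically you are claiming: if $N\trianglelefteq G$ is normally generated by $\{\mu_i\}$, $H\leq G$ satisfies $HN=G$, and each $\mu_i\in H$, then $H=G$. This is false. Take $G=F_2=\langle a,b\rangle$, $N=\langle\!\langle a\rangle\!\rangle=\ker(G\to\Z,\ a\mapsto 0,\ b\mapsto 1)$, and $H=\langle a,\,b^2ab^{-1}\rangle$. Then $HN=G$ and $a\in H$, yet mapping $a\mapsto(1\,2)$, $b\mapsto(1\,2\,3)$ into $S_3$ sends $H$ to $\Z/2$ while $G$ surjects onto $S_3$, so $H\neq G$. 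The normal closure of the $\mu_i$ in $H$ need not be normal in $G$, so it need not equal $N$. If instead you mean \emph{all} meridians, i.e.\ all conjugates, then the reduction is trivially true but circular: showing every conjugate of a meridian lies in $H$ is essentially the statement $N\subseteq H$, which is what you set out to prove. Either reading, the reduction does not carry weight; you must track the specific elements $g_j\gamma_j^{-1}$, not a class of normal generators.

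Second, the geometric step is not an accurate description of a finger move. A finger move pushes one sheet of $S$ through another along an interior arc and creates two new transverse double points; it does not ``drag a tube of $S_i$ down to $Y$,'' and there is no sense in which ``the tube reaches the boundary.'' What a finger move does to the complement is add a relation (killing a commutator of two meridians, see Kirby's observation cited in the paper), which by itself does not make the conjugating element between your two meridians land in the image. The paper gets around this differently: for each generator $g_i$ it fixes an immersed disk $E_i$ in $W$ bounded by $g_i*\overline{g_i'}$ with $g_i'\subseteq Y$, and then removes the finitely many points of $E_i\cap S$ one at a time using the \emph{relative Whitney trick} on $E_i$, along a relative Whitney disk $\Delta$ whose arc lies in $Y$. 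The finger moves on $S$ play only the auxiliary role of making $\Delta$ disjoint from $S$ before the relative Whitney trick is applied; they are not the operation that puts $g_i$ into the image. Once $E_i\cap S=\emptyset$, the disk $E_i$ is the based homotopy in $W\setminus\nu(S)$ from $g_i$ to a loop in $Y\setminus\nu(S)$. That is the step missing from your argument: you need a controlled way to move the witnessing nullhomotopy off $S$, and the relative Whitney trick (not the finger move alone) provides it.
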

 \begin{proof}
 Let $W$, $S$ and $Y$ be as in the lemma.  Pick a base point $y$ in $Y\setminus \nu(S)$.  Let $\{g_1,\dots, g_k\}$ be a generating set for $\pi_1(W\setminus \nu(S), y)$.  Since $\pi_1(Y,y)\onto \pi_1(W,y)$ is surjective, for each $i$, there is some $g_i'\in \pi_1(Y,y)$ and an immersed disk $E_i$ bounded by $g_i*\overline{g_i'}$.
 A dimensionality argument allows us to arrange that $g_i'$ is disjoint from $\bdry S$. If $E_i$ is disjoint from $S$, then $g_i$ is already in the image of $\pi_1(Y\setminus \nu(S))\to \pi_1(W\setminus \nu(S))$.  Otherwise, let $p$ be a transverse point of intersection between $E_i$ and $S$.  Let $\alpha_1$ and $\alpha_2$ be arcs in $S$ and $E_i$ from $p$ to points in $Y$.  Since $\pi_1(Y,y)\onto \pi_1(W,y)$, there is an arc $\alpha_3$ in $Y$ so that $\alpha_1*\alpha_3*\overline{\alpha_2}$ is nullhomotopic and so bounds a relative Whitney disk, $\Delta$ associated with $p$.  
 
 We would like to use the relative Whitney trick to eliminate the point of intersection at $p$, but $\Delta$ might be neither embedded nor disjoint from $S$.  Let $r$ be a transverse point of intersection in $\Delta \cap S$.  Find an arc $\beta$ in $\Delta$ from $r$ to a point interior to  $\alpha_1\subseteq S$.  As in Figure ~\ref{fig: finger move in surface} we  modify $S$ by a finger move along $\beta$ in order to reduce the number of points in $\Delta\cap S$ by 1.  This move adds two new double points to $S$.  As is briefly explained in \cite[Chapter XII \S 2]{Kirby89}, the finger move changes $\pi_1(W-\nu(S))$ by killing a commutator of two meridians of $S$. In particular, $g_1,\dots, g_n$ is still a generating set for $\pi_1(W-\nu(S))$.  Iterating, we  arrange the $\Delta \cap S=\emptyset$.   Finally, we modify $E_i$ by the relative Whitney trick along $\Delta$.  This eliminates the point of intersection $p\in S\cap E_i$.  It may add double points to $E_i$ and will change $g_i'$.  Regardless, we now have that $g_i$ is in the image of $\pi_1(Y\setminus \nu(S)) \to \pi_1(W\setminus \nu(S),y)$.  By following the steps in this paragraph for every $i=1,\dots, n$ we arrange that $\pi_1(Y\setminus \nu(S)) \onto \pi_1(W\setminus \nu(S),y)$ is surjective.  Notice that this process changes $S$ by some finger moves and  does not change $\bdry S$.

\begin{figure}[h]
     \centering\begin{subfigure}[b]{0.4\textwidth}
         \centering
         \begin{tikzpicture}
        \node at (0,.2){\includegraphics[width=.8\textwidth]{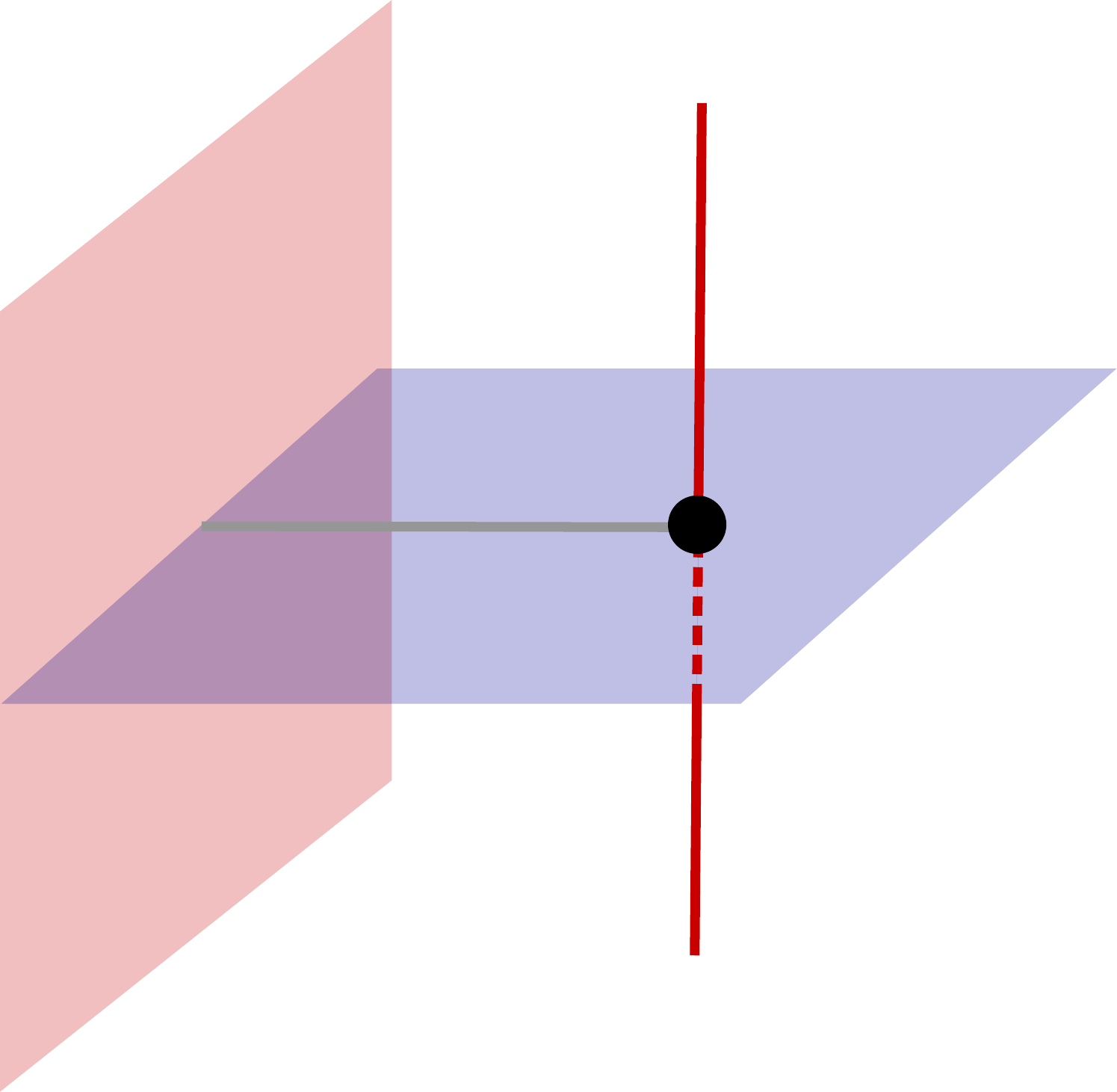}};
        \node[] at (1,.3) {$r$};
        \node[red] at (.9,2.2) {$S$};
        \node[red] at (-2,-1) {$S$};
        \node[blue] at (1.8,.8) {$\Delta$};
        \node at (-.3,.5) {$\beta$};
         \end{tikzpicture}
     \end{subfigure}
     \begin{subfigure}[b]{0.4\textwidth}
         \centering
         \begin{tikzpicture}
        \node at (0,.2){\includegraphics[width=.8\textwidth]{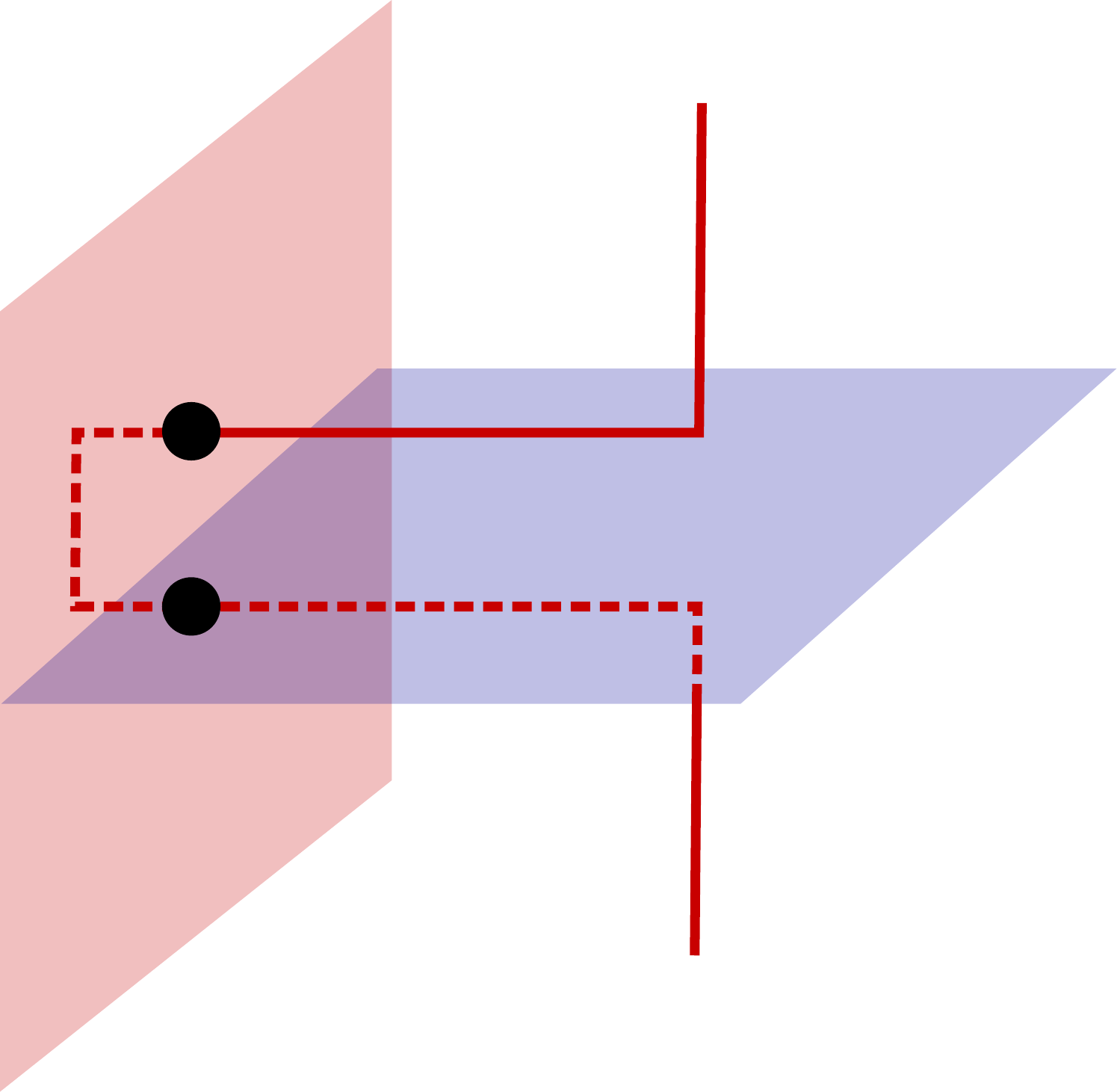}};
        \node[red] at (.9,2.2) {$S$};
        \node[red] at (-2,-1) {$S$};
        \node[blue] at (1.8,.8) {$\Delta$};
         \end{tikzpicture}
     \end{subfigure}
        \caption{Left: A point $q$ in the intersection of a surface $S$ with a Whitney disk $\Delta $ associated to a point in $E_i\cap S$ together with an arc $\beta\subseteq \Delta$ from $r$ to a point in $S\cap \bdry \Delta$  Right: A finger move reduces the number of  points in $S\cap \Delta$ at a cost of two new double points in $S$.  
         }
        \label{fig: finger move in surface}
\end{figure} 
 
 \end{proof}

For any link $K$ in any homology sphere $Y$ we can find a simply connected homology cobordism $W$ and an immersed union of annuli $A\subseteq W$ bounded by $K$ and the unlink in $S^3$.  Appealing to Lemma~\ref{lem: surface pi1 onto}, we arrange that $\pi_1(S^3\setminus \nu(U))\onto \pi_1(W\setminus \nu(A))$.  An iterative application of the following lemma allows us to extend $A$ to a symmetric relative  Whitney tower of arbitrary height.  We delay its proof until the end of the section.

\begin{lemma}[Compare to Lemma 5.4~\cite{DOP22}]\label{lem: height raising}
Let $W$ be a 4-manifold with boundary.  Let $T\subseteq W$ be a symmetric relative Whitney tower of height $h$.  Let $Y\subseteq \bdry W$ be a connected 3-manifold which contains all of the relative Whitney arcs of $T$.  Assume that every  component of $T_0$ has nonempty intersection with $Y$ and that the inclusion induced map $\pi_1(Y\setminus \nu(T))\onto \pi_1(W\setminus \nu(T))$ is onto.  

Then $T$ extends to a symmetric relative Whitney tower $T'$ of height $h+1$ such that $Y$ contains all of the relative Whitney arcs of $T'$ and such that the inclusion induced map $\pi_1(Y\setminus \nu(T'))\onto \pi_1(W\setminus \nu(T'))$ is onto.
\end{lemma}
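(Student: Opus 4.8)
The plan is to add a single top stage to $T$. We must pair every double point of the highest stage $T_h$ with a relative Whitney disk, arrange that all of the new relative Whitney arcs lie in $Y$, arrange that the interior of each new disk is disjoint from the entire lower part $T_0\cup\dots\cup T_h$ (this last condition, absent from the non-symmetric theory, is what makes $T'$ symmetric), and re-establish the surjectivity statement. The argument will parallel the proof of \cite[Lemma 5.4]{DOP22}; the one new point is that the hypothesis $\pi_1(Y\setminus\nu(T))\onto\pi_1(W\setminus\nu(T))$ is exactly what is needed to push the new disks off the lower stages. Throughout, $T_0,\dots,T_h$ are left unchanged, so $T'_j=T_j$ for $j\le h$ and $T'_{h+1}$ is the collection of new disks; in particular every component of $T'_0=T_0$ still meets $Y$.

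Fix a double point $p$ of $T_h$, with preimages $p_1,p_2$ lying on sheets $\Delta_1,\Delta_2$ of $T_h$ (when $h=0$ these are two, possibly equal, components of $S$). Using that each $\Delta_i$ is a relative Whitney disk whose relative Whitney arc lies in $Y$ --- or, when $h=0$, that every component of $S$ meets $Y$ --- choose disjoint embedded arcs $\alpha_1\subseteq\Delta_1$, $\alpha_2\subseteq\Delta_2$ running from $p_1,p_2$ to points $q_1,q_2$ in $T_h\cap Y$, interior to $T_h$ except at $q_1,q_2$ and missing all double points of $T_h$. Form the relative Whitney circle from $\psi(\alpha_1)$, $\psi(\overline{\alpha_2})$ and a connecting arc $\alpha_3$ in $Y$. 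A standard pushed-off picture along the Whitney framing (disjointing $\psi(\alpha_i)$ from $T_h$, and disjointing $\alpha_3$ from $T\cap Y$ inside $Y$, which is possible since $T\cap Y$ has codimension two in $Y$) turns this circle into a loop $C$ lying in $W\setminus\nu(T)$. Since $Y$ is connected and $\pi_1(Y\setminus\nu(T))\onto\pi_1(W\setminus\nu(T))$, after composing $\alpha_3$ with a suitable loop in $Y\setminus\nu(T)$ we may assume $[C]=1$ in $\pi_1(W\setminus\nu(T))$; then $C$ bounds a generic immersed disk there, and gluing on the pushoff annuli --- which sit in $\nu(T_h)$ and so meet $T_0\cup\dots\cup T_h$ only along the boundary arcs $\psi(\alpha_i)$ --- recovers a relative Whitney disk $\Delta^{(p)}$ for $p$ with relative Whitney arc in $Y$ and interior disjoint from $T_0\cup\dots\cup T_h$. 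Recall from Section~\ref{sect: symm rel} that the framing condition is automatic, since a trivialisation over the boundary arc of a disk extends over the disk. Carrying this out for every double point of $T_h$ and then putting the resulting disks in general position --- disjoint boundaries (arcs with distinct endpoints in the surface $T_h$ or in the 3-manifold $Y$ can be made disjoint) and disjoint interiors (2-disks in a 4-manifold) --- gives a collection $T'_{h+1}$ for which $T'=T\cup T'_{h+1}$ is a height $h+1$ symmetric relative Whitney tower whose relative Whitney arcs all lie in $Y$.

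Finally we check $\pi_1(Y\setminus\nu(T'))\onto\pi_1(W\setminus\nu(T'))$. The inclusion induces a surjection $q\colon\pi_1(W\setminus\nu(T'))\onto\pi_1(W\setminus\nu(T))$ whose kernel, by general position, is the normal closure of the meridians of the disks in $T'_{h+1}$. The key observation is that such a meridian can be slid along $\Delta^{(p)}$ to its boundary arc $\alpha_3^{(p)}\subseteq Y$, where it becomes a meridian of $\alpha_3^{(p)}$ inside $Y$; this meridian lies in $Y\setminus\nu(T')$, so the meridians of $T'_{h+1}$ all lie in the image of $\pi_1(Y\setminus\nu(T'))$. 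Combining this with the hypothesis and with the surjection $\pi_1(Y\setminus\nu(T'))\onto\pi_1(Y\setminus\nu(T))$ (again because $T'_{h+1}\cap Y$ has codimension two in $Y$), a diagram chase gives the claim, exactly as in \cite[Lemma 5.4]{DOP22}. Since the old relative Whitney arcs lie in $Y$ by hypothesis and the new ones do by construction, $Y$ contains all relative Whitney arcs of $T'$, and the proof is complete.

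I expect the main obstacle to be exactly the distinctively symmetric requirement that the interior of each new relative Whitney disk be disjoint from \emph{all} of $T_0,\dots,T_h$, not merely from $T_h$: this is why one needs surjectivity of $\pi_1$ on the \emph{complement} of $T$ (rather than of $W$), and why that surjectivity must be propagated along with the tower. A secondary technical point is the group-theoretic bookkeeping in the last paragraph, where some care is needed because the image of $\pi_1(Y\setminus\nu(T'))$ need not be a normal subgroup of $\pi_1(W\setminus\nu(T'))$, so one must check that the meridians of $T'_{h+1}$ and their relevant conjugates stay inside that image.
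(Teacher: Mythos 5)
The construction of the new relative Whitney disks in your second paragraph matches the paper's: both use the hypothesis $\pi_1(Y\setminus\nu(T))\onto\pi_1(W\setminus\nu(T))$ to choose the connecting arc $\alpha_3^j$ in $Y$ so that the resulting circle bounds an immersed disk in $W\setminus\nu(T)$, which is exactly what makes the extended tower symmetric. The gap is the final paragraph, and it is exactly where you express unease. Writing $G=\pi_1(W\setminus\nu(T'))$, $N=\ker q$ (the normal closure of the meridians), and $H$ for the image of $\pi_1(Y\setminus\nu(T'))$, the facts you establish are $q(H)=G/N$ and that the meridians lie in $H$. This does \emph{not} force $H=G$. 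A counterexample in pure group theory: take $G=F(a,m)$, $N=\langle\langle m\rangle\rangle$, and $H=\langle m,\,a^2ma^{-1}\rangle$. A Stallings-fold computation shows $a\notin H$, so $H\neq G$, yet $m\in H$ and $q(H)=\langle a\rangle=G/N$. So one would need a further geometric input to control the conjugates of the meridians by elements outside $H$, and your sketch does not supply it.

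The paper avoids this entirely by not attempting a diagram chase. Having built the disks $\Delta^1,\dots,\Delta^k$ with interiors disjoint from $T$, it applies Lemma~\ref{lem: surface pi1 onto} with $W'=W\setminus\nu(T)$, $Y'=Y\setminus\nu(T)$, and $S'=\Delta^1\cup\dots\cup\Delta^k$. (The hypotheses of that lemma are met: each $\Delta^j$ meets $Y'$ along its relative Whitney arc, and $\pi_1(Y')\onto\pi_1(W')$ is exactly the surjectivity you are handed.) That lemma changes the new disks by finger moves supported in $W\setminus\nu(T)$ --- which preserves the height-$(h+1)$ symmetric relative Whitney tower structure and leaves the arcs in $Y$ --- and \emph{produces} the surjectivity $\pi_1(Y\setminus\nu(T'))\onto\pi_1(W\setminus\nu(T'))$ as its conclusion rather than trying to deduce it. Your proof needs this geometric fix in place of the last paragraph; the rest is sound.
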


Between Lemmas~\ref{lem: surface pi1 onto}, \ref{lem: height raising}, and \ref{lem: relative Whitney tower to Whitney tower} we can prove Theorem~\ref{thm:main}.  We recall its statement.

\begin{reptheorem}{thm:main}
Let $h\in \N$ and $K$ be a link in a homology sphere.  There is some link in $S^3$ which is height $h$ Whitney tower concordant to $K$.  
\end{reptheorem}

\begin{proof}
Let $K$ be a link in a homology sphere $X$.  By \cite[Corollary 9.3]{FQ} $X$ bounds a simply connected homology ball.  Removing an open 4-ball from the interior, we see a simply connected homology cobordism $W$ from $X$ to $S^3$.  

Let $A$ be an immersed union of annuli bounded $K$ and the unlink in $S^3$.  This union of annuli exists since $W$ is simply connected.  By Lemma~\ref{lem: surface pi1 onto} we  arrange that $\pi_1(S^3\setminus \nu(A))\onto \pi_1(W\setminus \nu(A))$ is surjective.  Since a height $0$ symmetric relative Whitney tower is just an immersed surface,  the assumptions of Lemma~\ref{lem: height raising} are satisfied by $T=A$, $Y=S^3$, and $h=0$.  The tower produced by Lemma~\ref{lem: height raising} satisfies all of the assumptions of Lemma~\ref{lem: height raising} and so we can appeal to the lemma again.  Iterating, we see that $A$ extends to a height $h-1$ symmetric relative Whitney tower in $W$ and that all of its relative Whitney arcs are contained in $S^3\subseteq W$.  Finally, by Lemma~\ref{lem: relative Whitney tower to Whitney tower} there is a height $h-1$ symmetric Whitney tower whose base surface is an annulus bounded by $K$ together with some link $J$ in $S^3$.  

  It remains only to arrange that there is a trivialization of the normal bundle of $A$ which restricts to the $0$ framings on $K$ and $J$.  We  do so one component at a time.  First extend the $0$ framing on $K_i$ over $A_i$ (the component of $A$ bounded by $K_i\cup-J_i$).  This can be done since an annulus deformation retracts to its either boundary component.  That this restricts to $J_i$ to give the $f_i$-framing for some $f_i\in \Z$.    Let $F_i\subseteq Y$ be a Seifert surface for $K_i$.  Cap the $Y$-boundary component of $W$ with a contractible 4-manifold. Call the result $\mathcal{B}$.  Since $\mathcal{B}$ is a contractible 4-manifold bounded by $S^3$, $\mathcal{B}$ is homeomorphic to the 4-ball by the topological 4-dimensional Poincar\'e conjecture.     Since the $0$-framing of $K_i$ extends over both $A_i$ and $F_i$,  $S_i = A_i\cap F_i$ is a framed surface in the 4-ball which restricts to the $f_i$-framing of $-J_i$.  Then a framed pushoff $S_i^+$ is bounded by $(-J_i)^+$, the $f_i$-framed pushoff of $-J_i$.  
  
\begin{figure}[h]
         \centering
         \begin{tikzpicture}
        \node at (0,.2){\includegraphics[width=.25\textwidth]{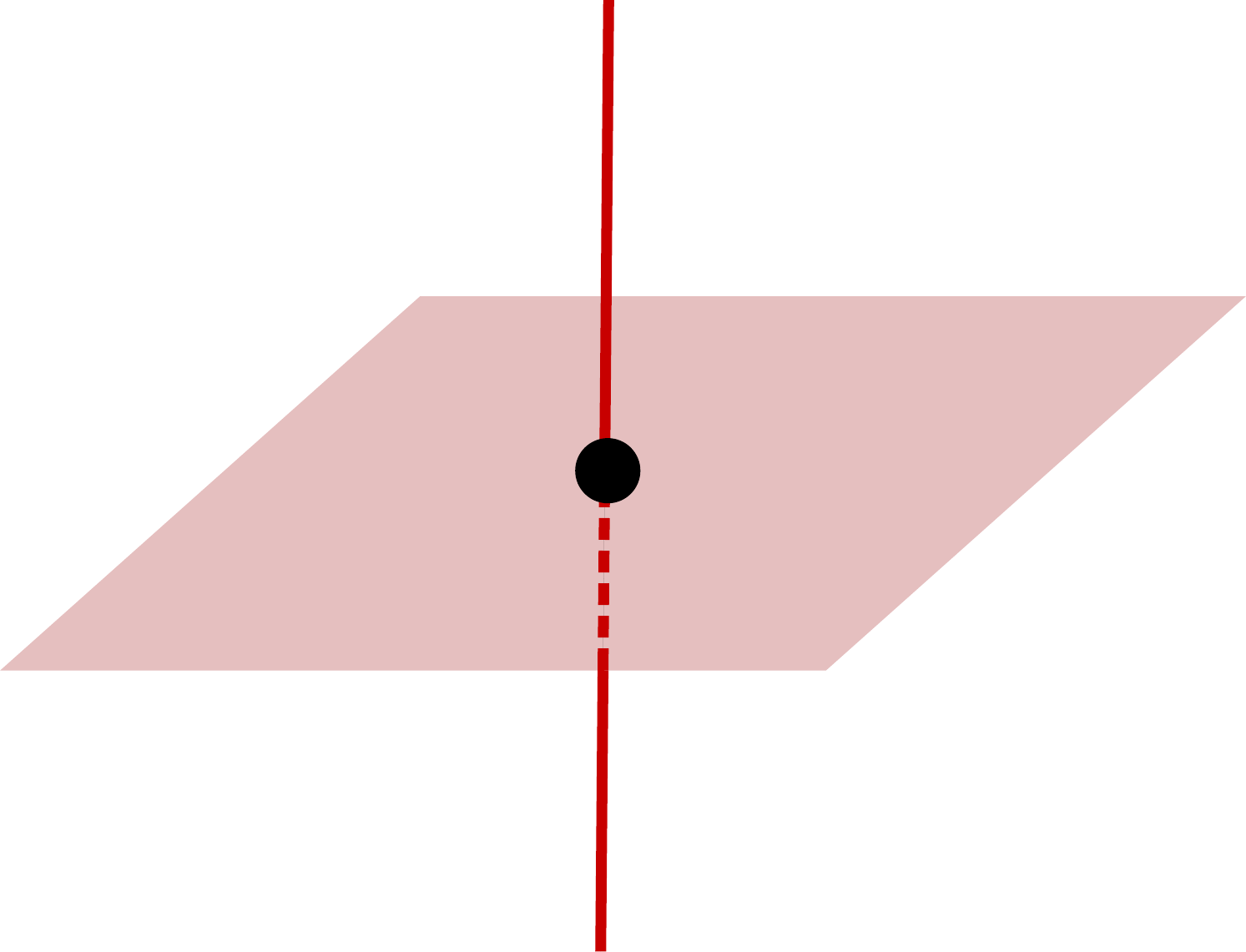}};
        \node[] at (1,.3) {$S$};
        \node[] at (-.3,-1) {$S$};
        \node at (6,.2){\includegraphics[width=.25\textwidth]{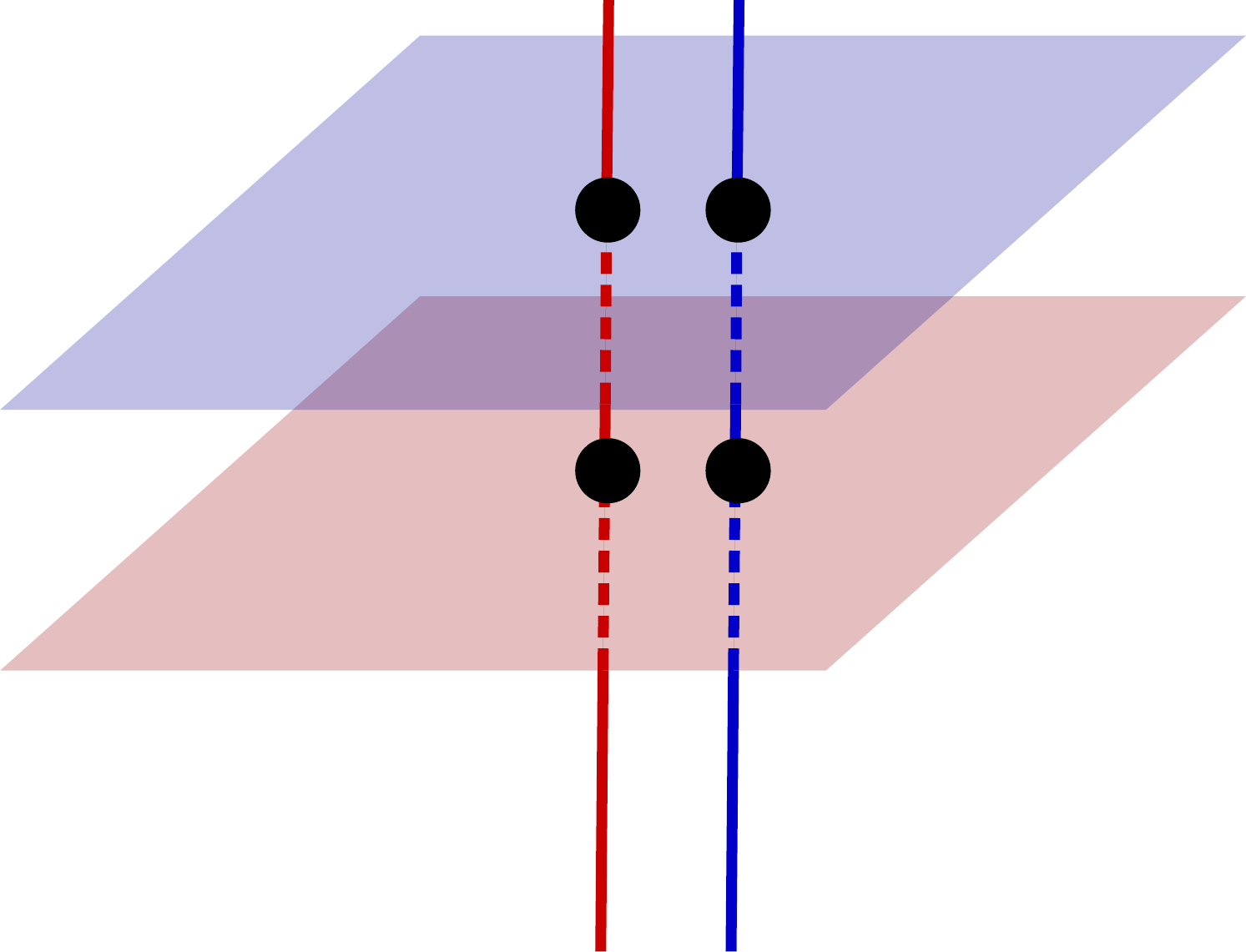}};
        
        \node[] at (7,.3) {$S$};
        \node[] at (7,1.3) {$S^+$};
        \node[] at (5.7,-1) {$S$};
        \node[] at (6.65,-1) {$S^+$};
         \end{tikzpicture}
        \caption{Left: a double point in a framed surface $S$.  Right: A pushoff $S^+$ intersects the original surface in two points with the same sign as that double point.
         }
        \label{fig:doublePointPushoff}
\end{figure} 
  
Each double point of $S_i$ results in two points of intersection with the same sign between $S_i$ and $S_i^+$, as in Figure~\ref{fig:doublePointPushoff}.  We may as well assume that $h-1\ge1$, so that all of the self intersections of $S_i$ are paired by Whitney disks, and in particular the signed count of the number of intersections between $S_i$ and $S_i^+$ is zero.     Finally since linking numbers can be computed as the signed count of the number of intersections between immersed surfaces in the 4-ball, 
$$f_i=\lk(-J_i,(-J_i)^+) = S_i\cdot S_i^+ = 0.$$
Therefore, the $0$-framings on $K$ and $J$ each extend over $A$, and $A$ extends to a height $h-1$ Whitney tower.  Thus, $K\sim_h J$ completing the proof

%
%
\end{proof}

%
%

Finally, we prove Lemma~\ref{lem: height raising} by appealing to Lemma~\ref{lem: surface pi1 onto}.  

\begin{proof}[Proof of Lemma~\ref{lem: height raising}]
Let $W$, $T$, and $Y$ be as in the statement.  
Recall that $T_h$ is the height $h$ part of $T$.  Let $p^1,p^2,\dots,p^k$  be the double points in $T_h$.  Let $\psi:S\to W$ is the immersion of a surface parametrizing $T_h$.  For any $j=1,\dots, k$, let $\psi^{-1}\{p^j\}=\{p_1^j,p_2^j\}$.   There are embedded arcs $\alpha_1^j$ and $\alpha_2^j$ in $S$ running from $p_1^j$ and $p_2^j$ to points $q_1^j$ and $q_2^j$ interior to $\psi^{-1}(Y)$.  We will further arrange that $\alpha_1^1, \alpha_2^1,\dots, \alpha_1^k,\alpha_2^k$ are all disjoint from each other.  
  Such arcs exist when $h=0$ because $Y$ has nonempty intersection with each component of $T_0$ and when $h>0$ because $Y$ contains every relative Whitney arc.  Since $\pi_1(Y\setminus \nu(T))\onto \pi_1(W\setminus \nu(T))$, for $j=1,\dots, k$, there is some arc $\alpha_3^j\subseteq Y$ running between $\psi(q_1^j)$ and $\psi(q_2^j)$
   so that $\psi\left(\alpha_1^j\right)*\alpha_3^j*\psi\left(\overline{\alpha_2^j}\right)$ bounds an immersed disk $\Delta^j$ with interior disjoint from $T$.  Notice that $T\cup\left(\Delta^1\cup\dots\cup\Delta^k\right)$ is now a height $h+1$ relative Whitney tower.  
   
     The assumptions of Lemma~\ref{lem: surface pi1 onto} are satisfied by $W'=W\setminus \nu(T)$,  $Y'=Y\setminus \nu(T)\subseteq \bdry W'$ and the immersed surface $S'=\Delta^1\cup\dots, \Delta^k\subseteq W'$.  Thus, we can change $\Delta^1\cup\dots\cup \Delta^k$ by a sequence of finger moves in $W\setminus \nu(T)$ so that $\pi_1(Y\setminus \nu(T\cup \Delta^1\cup\dots\cup \Delta^k)) \onto \pi_1(W\setminus \nu(T\cup \Delta^1\cup\dots\cup \Delta^k))$ is surjective, completing the proof.  

\end{proof}


\section{Whitney tower concordance and solvable concordance}\label{sect: tower implies sol}

For any link $K\subseteq S^3$,  If $K\in \mathcal{W}_{h+2}$ then $K$ is $h$-solvable \cite[Theorem 8.12]{COT03}.  In this section we show that if two links in homology spheres are height $h+2$-Whitney tower concordant then they are $h$-solvably concordant.  We begin by recalling the definition of $h$-solvable concordance.

\begin{definition}[Definition 2.3 of \cite{Davis2019}]\label{defn: solvable concordance}
Let $K\subseteq X$ and $J\subseteq Y$ be links in  homology spheres.  We say that $K$ is \emph{$n$-solvably concordant} to $J$ if there exists a cobordism $W$ from $X$ to $Y$ in which the components of $K$ and $J$ cobound a disjoint collection of embedded locally flat annuli $A\subseteq W$ such that:
\begin{enumerate}
\item $H_1(W)=0$, so that $W$ is an $H_1$-cobordism. 
\item \label{solvable surfaces} There exist locally flat embedded closed oriented surfaces in $W$ with trivial normal bundles $L_1,D_1,\dots, L_k,D_k$ all disjoint from $C$ and from each other except that for each $i=1,\dots, k$, the surfaces $L_i$ and $D_i$ intersect transversely in a single point.
   \item \label{solvable basis for H2} $\{[L_1],[D_1],\dots,[L_k],[D_k]\}$ generates $H_2(W)$.
\item \label{solvable lift} For all $i=1,\dots, k$, the images of $\pi_1(L_i)\to \pi_1(E(A))$ and $\pi_1(D_i)\to \pi_1(E(A))$ are both contained in $\pi_1(E(A))^{(n)}$.  
\end{enumerate}
\end{definition}

Here $E(A)=W\setminus\nu(A)$ is the exterior of $A$ and  $\pi_1(E(A))^{(n)}$ refers to the \emph{derived series} of the group $\pi_1(E(A))$.  It is defined recursively as  $\pi_1(E(A))^{(0)} = \pi_1(E(A))$ and $\pi_1(E(A))^{(n+1)} = [\pi_1(E(A))^{(n)},\pi_1(E(A))^{(n)}]$.

\begin{remark}
Similar to Remark~\ref{rmk: smoothing}, one gets an equivalent definition by requiring these surfaces to be smooth, either with regard to a smooth structure on $W\setminus \nu(A)$, if one exists, or a smooth structure on $W\setminus(\nu(A)\cup\{p\})$.  Indeed by \cite[Theorem 8.1A]{FQ} $L_1\cup D_1\cup\dots \cup L_k\cup D_k$ admits a regular homotopy inside of a small neighborhood to a smoothly immersed surface.  Since a regular homotopy preserves  intersection numbers, $L_i$ still has algebraically zero self intersections, and so by increasing the genus as in Figure~\ref{fig: tubing} we can arrange that the geometric intersections between $L_1\cup D_1\cup\dots\cup L_k\cup D_k$ are preserved.   As this occurs in a small neighborhood of $L_1\cup D_1\cup\dots\cup L_k\cup D_k$, the image of $\pi_1(L_i)\to \pi_1(E(A))$ and $\pi_1(D_i)\to \pi_1(E(A))$ is still contained in $\pi_1(E(A))^{(n)}$.  
\end{remark}

\begin{figure}[h]
         \centering
         \begin{tikzpicture}
        \node at (0,.2){\includegraphics[width=.1\textwidth]{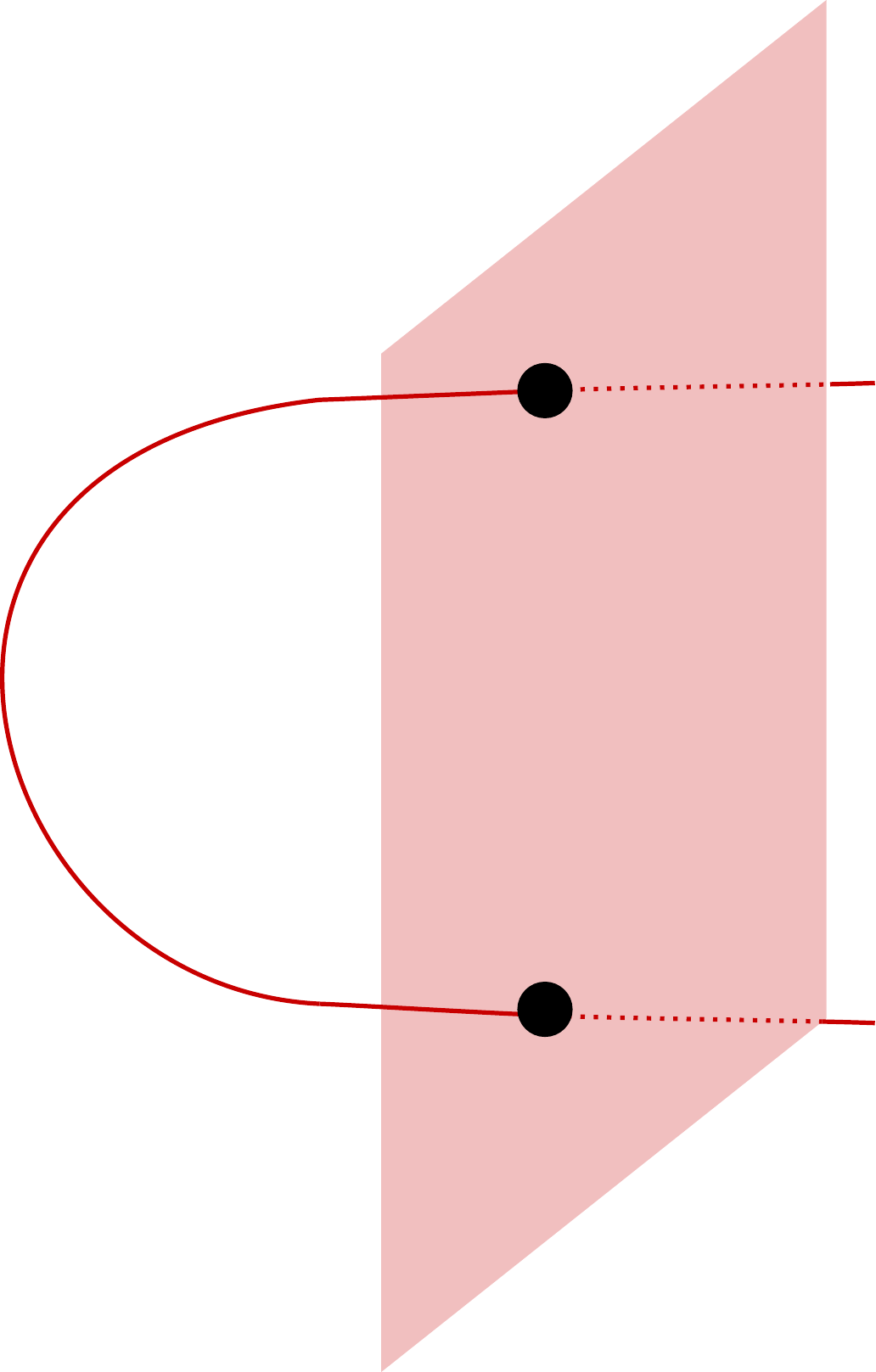}};
        \node at (6,.2){\includegraphics[width=.1\textwidth]{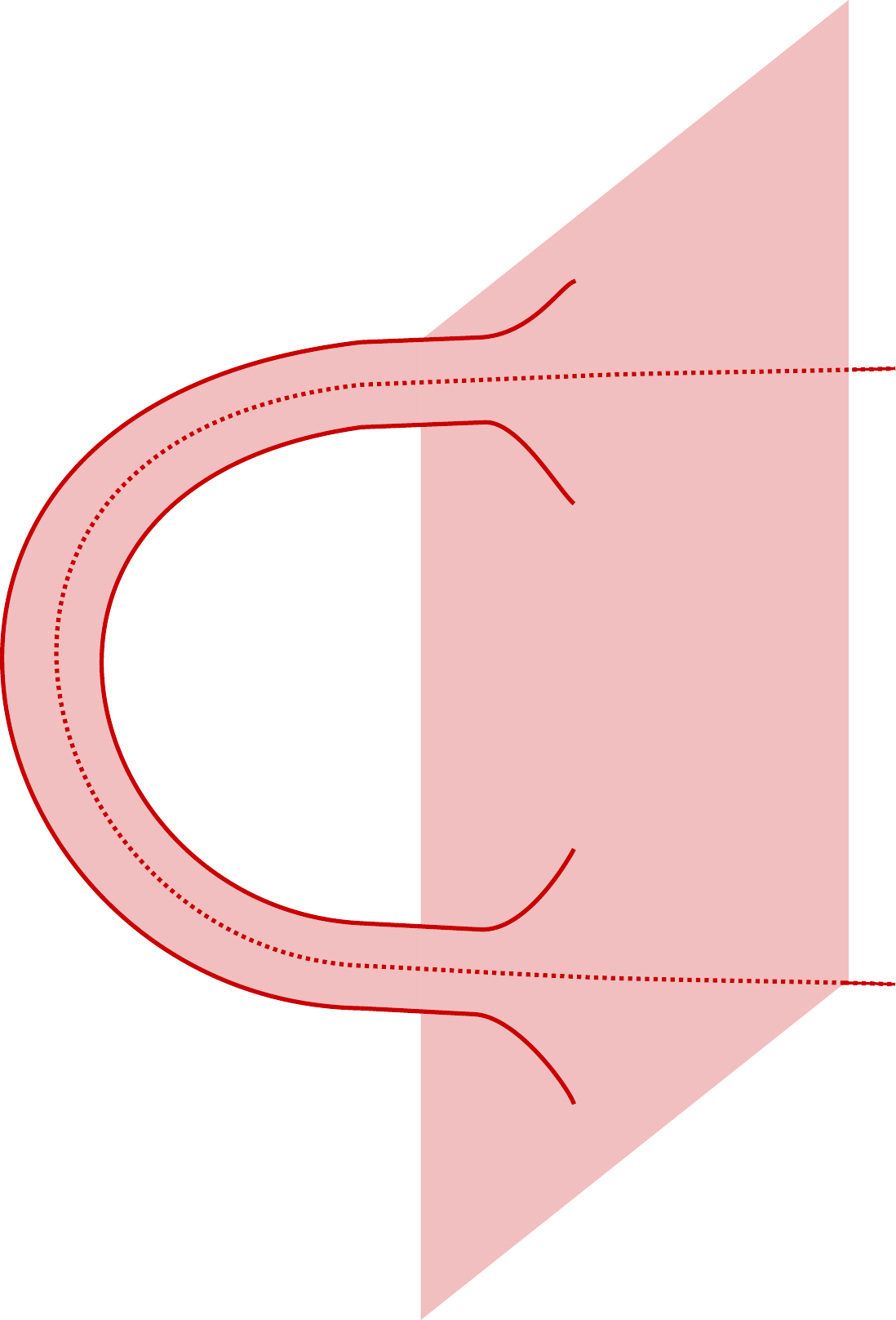}};
         \end{tikzpicture}
        \caption{A pair of double points of opposite sign in an immersed surface may be cancelled at the cost of increasing the genus of the surface by 1.
         }
        \label{fig: tubing}
\end{figure} 

There is a geometric interpretation of the derived series of the fundamental group in the language of gropes.  A \emph{height $1$ symmetric grope} is a compact oriented  surface.  A height $h+1$ symmetric grope consists of a surface together with a symplectic basis $\{a_1,b_1, \dots, a_g, b_g\}$ and $2g$ many height $h$ gropes bounded by $a_1, b_1, \dots, a_g, b_g$.  Generally in the literature, these are taken to be embedded in a 4-manifold and have some framing condition, but these conditions are not relevant to our work.    For any 4-manifold $X$, a straighforward inductive argument reveals that $\gamma\in\pi_1(X)^{(h)}$ if and only if $\gamma$ bounds an immersed height $h$ grope in $X$.  

\begin{proposition}\label{prop: tower to solution}
Let $K$ and $J$ be links in homology spheres.  If $K$ and $J$ are height $h+2$ Whitney tower concordant, then $K$ and $J$ are $h$-solvably concordant.  
\end{proposition}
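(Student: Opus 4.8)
Suppose $K$ and $J$ are height $h+2$ Whitney tower concordant, so there is a strong homology cobordism $W$ from $X$ to $Y$ and an immersed union of annuli $A\subseteq W$, with $\partial A$ the components of $K$ and $J$ and the $0$-framings extending over $A$, which forms the base surface $T_0$ of a height $h+1$ symmetric Whitney tower $T = T_0\cup T_1\cup\dots\cup T_{h+1}$. My goal is to produce the embedded annuli and the surfaces $L_i, D_i$ required by Definition~\ref{defn: solvable concordance}, relative to a possibly modified $4$-manifold. The first step is to convert the immersed annuli $A$ into embedded annuli: apply the Whitney move along each of the top-stage Whitney disks $T_{h+1}$. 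Since $T$ is symmetric, the disks in $T_{h+1}$ have interiors disjoint from all lower stages, but they need not be embedded nor disjoint from $A$; however, each such Whitney disk can first be cleaned up by pushing its interior intersections with $A$ and with $T_1,\dots,T_h$ down (finger-move/tubing arguments as in Section~\ref{sect: proof of main}), at the cost of adding genus or new intersection pairs that are again paired by Whitney disks at a lower height budget. A cleaner route, and the one I would actually carry out: perform Whitney moves along $T_{h+1}$ to remove the double points of $T_h$, turning $T$ into a height $h$ Whitney tower whose \emph{base surface is still $A$} but whose top Whitney disks (now $T_h$) are embedded and disjointly embedded relative to everything; iterating once more, do surgery/Whitney moves to replace $A$ by an \emph{embedded} union of annuli $A'$ in a new $H_1$-cobordism $W'$, while recording the "geometric duals" of the ambient surgeries as the pairs $L_i, D_i$. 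This is exactly the mechanism by which \cite[Theorem 8.12]{COT03} shows $\W_{h+2}\subseteq \F_h$, and the homology-sphere-with-cobordism version is a line-by-line adaptation.

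The key steps, in order: (1) Use Remark~\ref{rmk: smoothing} to work smoothly away from a point, so that all the surgery moves below are smooth. (2) Starting from the height $h+1$ Whitney tower $T$ on $A$, perform the Whitney move along the top two stages $T_h, T_{h+1}$ to arrive at an \emph{embedded} union of annuli $A'$ bounded by $K$ and $J$ in a $4$-manifold $W'$ obtained from $W$ by ambient surgery on the Whitney circles; here is where one must be careful that the Whitney disks of $T_{h+1}$ are first made embedded and interior-disjoint from the rest of $T$ by finger moves, which is permissible because each finger move only kills a commutator of meridians (cf. the argument in Lemma~\ref{lem: surface pi1 onto}) and hence does not disturb the tower structure below the top stage. (3) Each ambient surgery replaces a neighborhood $S^1\times D^3$ in $W$ by $D^2\times S^2$; the sphere $\{0\}\times S^2$ becomes one of the $L_i$ (or after genus-increasing via Figure~\ref{fig: tubing}, a surface $L_i$), and a Clifford-type torus or the cocore $D^2\times\{\mathrm{pt}\}$-boundary capped off provides the dual $D_i$ meeting $L_i$ transversely once. (4) Check the homological conditions: $W'$ still has $H_1(W')=0$ because the surgeries are on nullhomotopic circles (the Whitney circles bound Whitney disks, hence are nullhomotopic, hence $0$ in $H_1$), and $H_2(W')$ is generated by $H_2(W)=0$ together with the new classes $[L_i]$; together with the $[D_i]$ these generate (this uses that the original $W$ is a homology cobordism). (5) Finally, and this is the crux, verify condition~\eqref{solvable lift}: the images of $\pi_1(L_i)$ and $\pi_1(D_i)$ in $\pi_1(E(A'))$ lie in the $h$-th derived subgroup. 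This is where the \emph{height} of the tower is consumed: $L_i$ is (up to regular homotopy in its neighborhood) the capped-off belt sphere of a surgery that undid a Whitney disk of stage $h+1$, and $D_i$ is essentially a pushoff of a stage-$h$ Whitney disk; the recursive tower structure exhibits $\pi_1(D_i)$ as bounding a height-$h$ grope in $E(A')$ (successively using the lower Whitney disks $T_{h-1}, \dots, T_1$ and finally $A$ itself to build the grope stages), and by the grope-characterization of the derived series recalled just before the statement, a loop bounding a height-$h$ grope lies in $\pi_1(E(A'))^{(h)}$.

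\textbf{Main obstacle.} The delicate point is step (5): correctly bookkeeping how the $h+1$ rounds of Whitney disks, when undone by Whitney moves and ambient surgeries, translate into a height-$h$ grope bounded by the dual surfaces $D_i$ in the \emph{exterior} $E(A')=W'\setminus\nu(A')$ rather than in $W'$ itself. One must track meridians and ensure that each stage of the grope genuinely misses $A'$; the loss of two in the height (height $h+2$ tower yields only $h$-solvability) is precisely accounted for by the two top stages being spent to embed $A$, mirroring the $\W_{h+2}\subseteq\F_h$ bookkeeping of \cite[Theorem 8.12]{COT03}. A secondary nuisance is that the Whitney disks and the $L_i$ come out immersed rather than embedded after the smoothing of Remark~\ref{rmk: smoothing}; the genus-increasing trick of Figure~\ref{fig: tubing} repairs this without changing the relevant fundamental-group images, since it is supported in a small neighborhood. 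Once these are handled, conditions~\eqref{solvable surfaces}--\eqref{solvable basis for H2} are formal, and the proof concludes. Corollary~\ref{cor:sol} then follows by combining Theorem~\ref{thm:main} (take $h+2$ in place of $h$) with this proposition.
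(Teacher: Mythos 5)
Your high-level plan is right — surger the top Whitney circles to embed $A$, use belt spheres and capped Whitney disks for $L_i,D_i$, and use gropes to verify condition~\eqref{solvable lift} — and you correctly anticipate the two-stage loss of height. But there are two genuine gaps, both at points you flag as delicate. First, you propose to make the top-stage Whitney disks ``embedded and interior-disjoint from the rest of $T$ by finger moves.'' Finger moves are the \emph{inverse} of Whitney moves: they create pairs of intersection points, they never remove them, so they cannot clean up the self-intersections of $T_{h+1}$. The device the paper uses is different and avoids the problem entirely: one surgers not along $\partial\Delta_j$ but along a pushoff $\gamma_j$ into the interior of $\Delta_j$. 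After the surgery, the remaining annular collar of $\Delta_j$ capped by the surgery cocore is automatically an embedded Whitney disk with interior disjoint from the rest of the tower (all the self-intersections of $\Delta_j$ were thrown away with the inner disk), and the Whitney moves down the tower then go through without any further repairs.

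Second, your verification of condition~\eqref{solvable lift} assigns the roles backwards, and your version of it would not go through. You take $L_i$ to be the belt sphere, hence with trivial $\pi_1$, and claim $D_i$ bounds a height-$h$ grope built from $T_{h-1},\dots,T_1,A$. But the belt sphere $L_i^0$ meets the new $T_{h+1}'$ (the annular collar of $\Delta_i$) transversely in one point; once the tower is collapsed by Whitney moves, the resulting embedded $A'$ lies in a small neighborhood of $T'$ and so this intersection propagates down — the belt sphere is \emph{not} disjoint from $A'$, so you cannot keep it as a sphere. The paper's real work is to replace $L_i^0$ by a positive-genus surface $L_i$ that is the base of a height-$(h+1)$ symmetric grope in the exterior of $T'$, built by alternately finger-moving a disk off $T'_{j}$ and tubing over the Whitney disk that pairs the two resulting intersections with $T'_{j-1}$; this is where the stages $h,h-1,\dots,1$ are spent, and it is what puts $\pi_1(L_i)$ into $\pi_1(E(A))^{(h)}$. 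Your proposed grope argument for $D_i$ cannot be run in parallel: the double points of $D_i^0$ come from the inner disk of $\Delta_i$, which sits at the \emph{top} stage, so there are no higher Whitney disks pairing them and no tower structure to convert into grope stages. The paper instead embeds $D_i^0$ by tubing each double point (and each intersection with $D_j^0$) to a pushoff of the already-constructed $L_j$; the resulting $D_i$ is a planar piece union pushoffs of the $L_j$'s, so its $\pi_1$-image lands in the normal closure of the images of the $\pi_1(L_j)$, hence in $\pi_1(E(A))^{(h)}$. In short: the grope belongs to $L_i$, not $D_i$, and the control on $D_i$ is inherited from the $L_j$'s rather than from the tower directly.
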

\begin{proof}
Let $K$ and $J$ be links in homology spheres $X$ and $Y$, respectively.  Let $W$ be a homology cobordism from $X$ to $Y$ in which the components of $K$ and $J$ cobound an immersed union of  annuli $A$ which extends to a height $h+1$ symmetric Whitney tower $T$.  

Let $\Delta_1,\dots, \Delta_k$ be a complete list of all of the height $h+1$ Whitney disks in $T_{h+1}$.  For every $j=1,\dots, k$, let $\gamma_j$ be the framed curve given by pushing $\bdry \Delta_j$ slightly into the interior of $\Delta_j$.  Let $W'$ be the result of modifying $W$ by surgery along $\gamma_1,\dots, \gamma_k$.

First we exhibit a disjoint union of embedded annuli in $W'$ cobounded by the components of $K$ and the components of $J$.  Let $T'\subseteq W'$ be the height $h+1$ Whitney tower given starting with $T$ and replacing each $\Delta_j\subseteq T_{h+1}$ with the embedded disk glued in along $\gamma_j$.  Notice that all of the height $h+1$ Whitney disks in $T'$ are embedded and have interiors disjoint from all other surfaces in $T'$.  Thus, we may use the Whitney trick to eliminate all of the intersections in $T'_{h}$.  Next use these embedded height $h$ Whitney disks to eliminate intersections in $T'_{h-1}$.  Iterate until $T'_0$ is embedded.  We now have that in $W'$ the components of $L$ and $J$ cobound an embedded union of disjoint annuli, which we will  call $A$.  Observe that $A$ is contained in a small neighborhood of $T'$.    The remainder of the proof amounts to showing that the conditions of Definition~\ref{defn: solvable concordance} are satisfied.

As modifying a 4-manifold by surgery along nullhomologous simple closed curves does not change first homology, $H_1(W')=0$.  Therefore $W'$ is an $H_1$-cobordism.  Each time we perform surgery we increase the rank of second homology by $2$.  Let $L_i^0$ be a meridonal 2-sphere for $\gamma_i$ (an embedded sphere bounding  a 3-ball in $W$ which intersects $\gamma_i$ transversely in a single point).  Let $D_i^0\subseteq W'$ be the immersed sphere given by capping the disk in $\Delta_i$ bounded by $\gamma_i$ with the disk glued to the surgery curve $\gamma$.  We now have that $\{[L_1^0],[D_1^0],\dots,[L_k^0],[D_k^0]\}$ forms a basis for $H_2(W')$.  Being the boundary of an embedded 3-sphere in $W$, $L_i^0$ has trivial normal bundle.  Since the faming on $\gamma_i$ used to perform surgery was the framing induced by $\Delta_i$, $D_i^0$ has trivial normal bundle.

\begin{figure}[h]

     \begin{subfigure}[b]{0.3\textwidth}
         \centering
         \begin{tikzpicture}
        \node at (0,.2){\includegraphics[width=.9\textwidth]{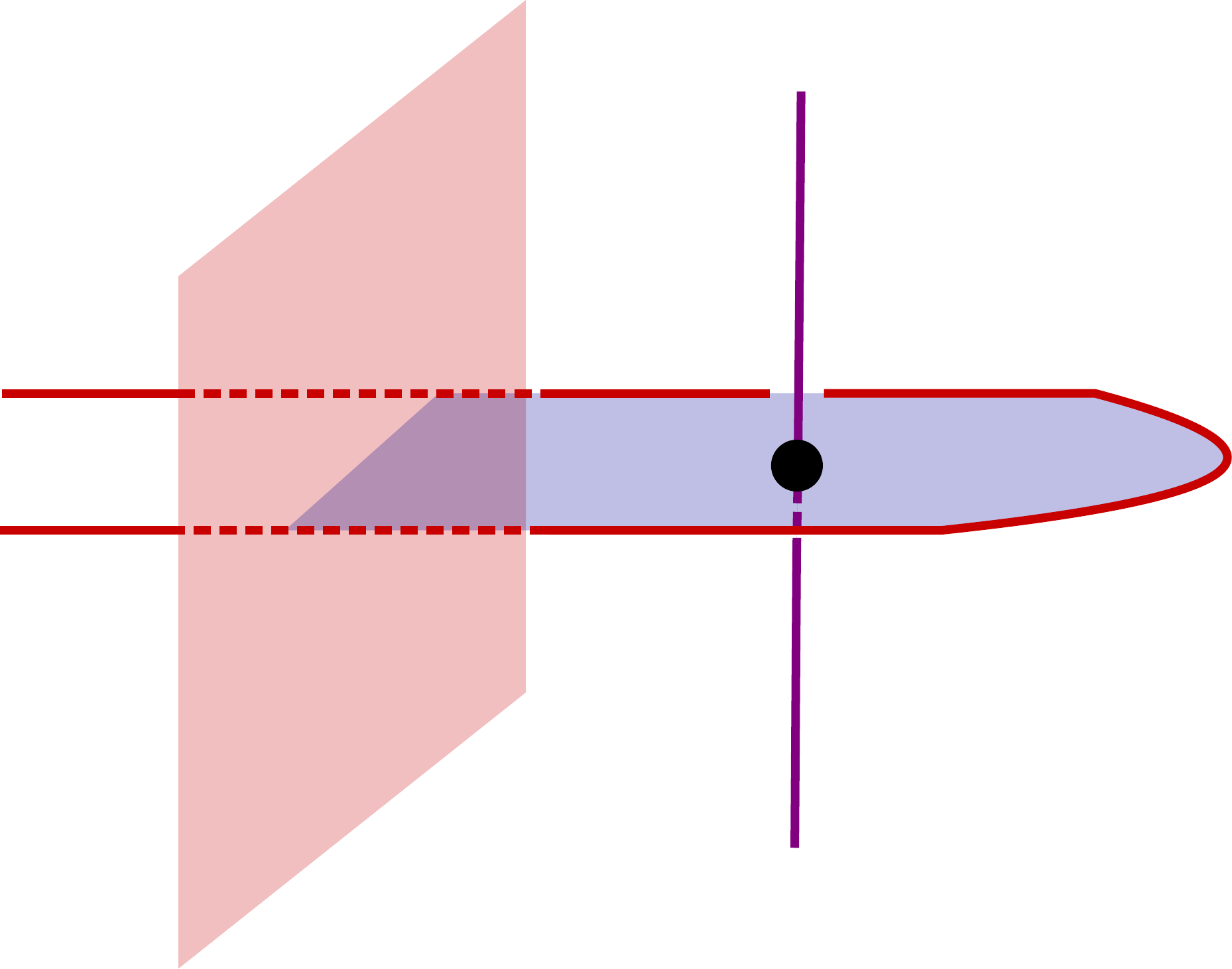}};
        \node at (0,.3){\tiny $T_{h+1}$};
        \node at (1,1.5){\tiny $L_i^0$};
        \node at (-1.3,1.5){\tiny $T_{h}$};
        \node at (-2.3,.2){\tiny $T_{h}$};
        \end{tikzpicture}
        \end{subfigure}
     \begin{subfigure}[b]{0.3\textwidth}
         \centering
         \begin{tikzpicture}
        \node at (0,.2){\includegraphics[width=.9\textwidth]{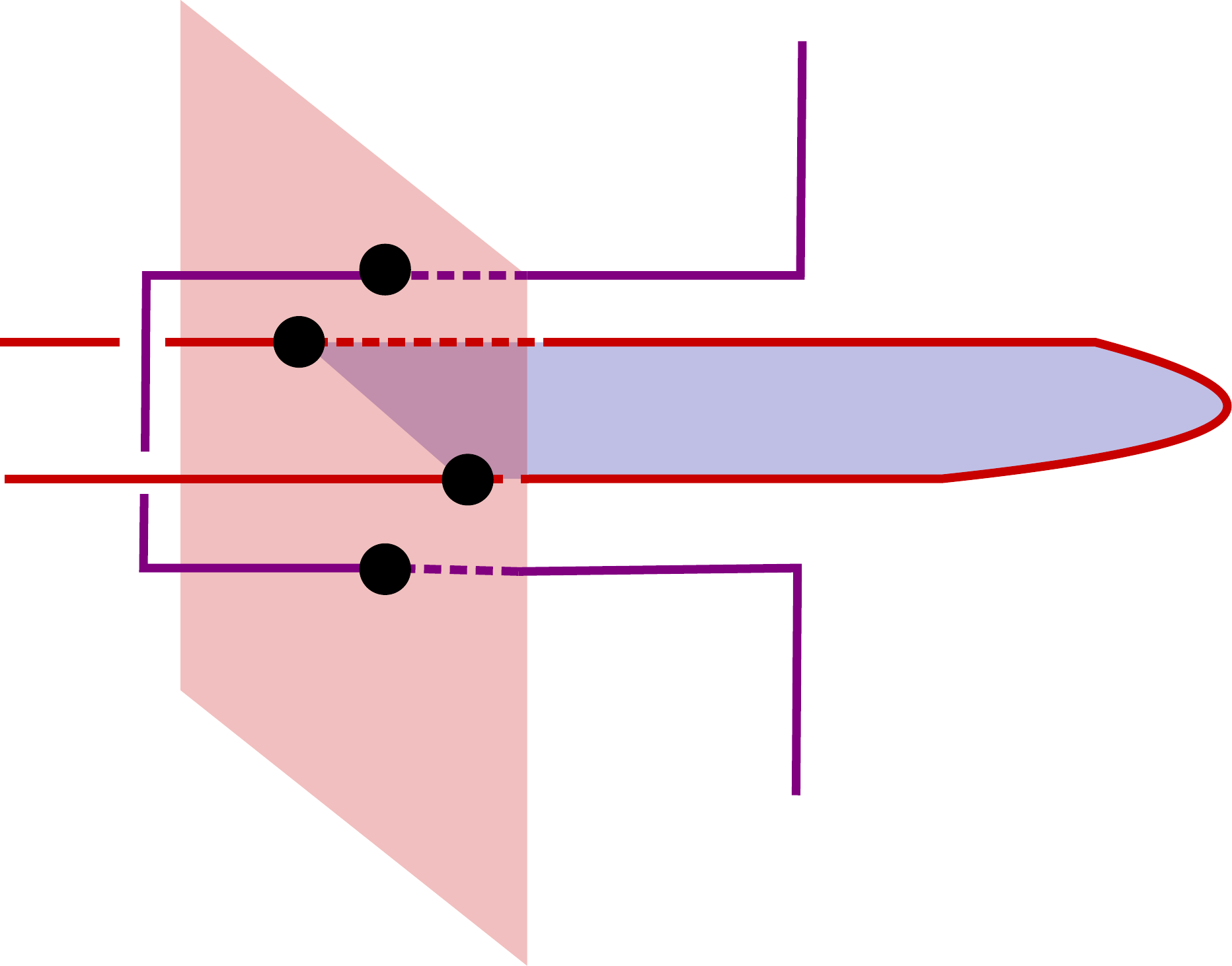}};
        \node at (1,.5){\tiny $T_{h+1}$};
        \node at (1,1.5){\tiny $L_i^0$};
        \node at (-1.3,1.5){\tiny $T_{h}$};
        \node at (-2.2,.4){\tiny $T_{h}$};
        \end{tikzpicture}
        \end{subfigure}
     \begin{subfigure}[b]{0.3\textwidth}
         \centering
         \begin{tikzpicture}
        \node at (0,.2){\includegraphics[width=.9\textwidth]{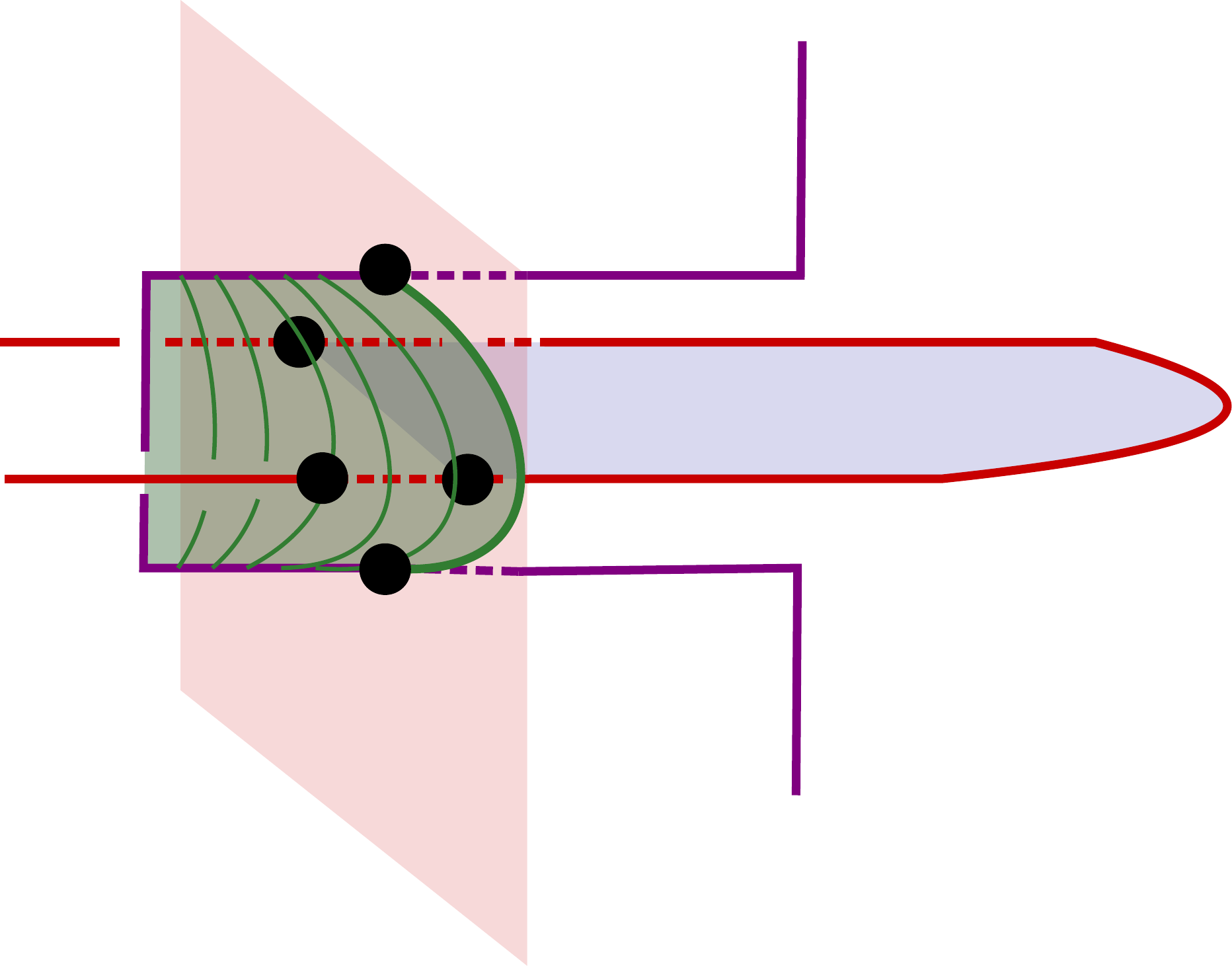}};
         \node at (1,.5){\tiny $T_{h+1}$};
        \node at (1,1.5){\tiny $L_i^0$};
        \node at (-1.3,1.5){\tiny $T_{h}$};
        \node at (-2.2,.4){\tiny $T_{h}$};
        \node[below] at (-1.8,-.5){\tiny $U$};
        \draw[->](-1.8, -.5)--(-1.5,0);
         \end{tikzpicture}
         \end{subfigure}
        \caption{Left: $L_i^0$ intersects $T_{h+1}$ transversely in a single point.  Center: After a finger move $L_i^0$ (purple) intersects $T_h$ (red) in two points with opposite sign.  Right: An embedded Whitney disk $U$ pairing these two points of intersection and which intersects $T_h$ transversely in one point.  
         }
        \label{fig: Fix Lagrangian 1}
\end{figure}

\begin{figure}[h]

     \begin{subfigure}[b]{0.3\textwidth}
         \centering
         \begin{tikzpicture}
        \node at (0,0){\includegraphics[width=.4\textwidth]{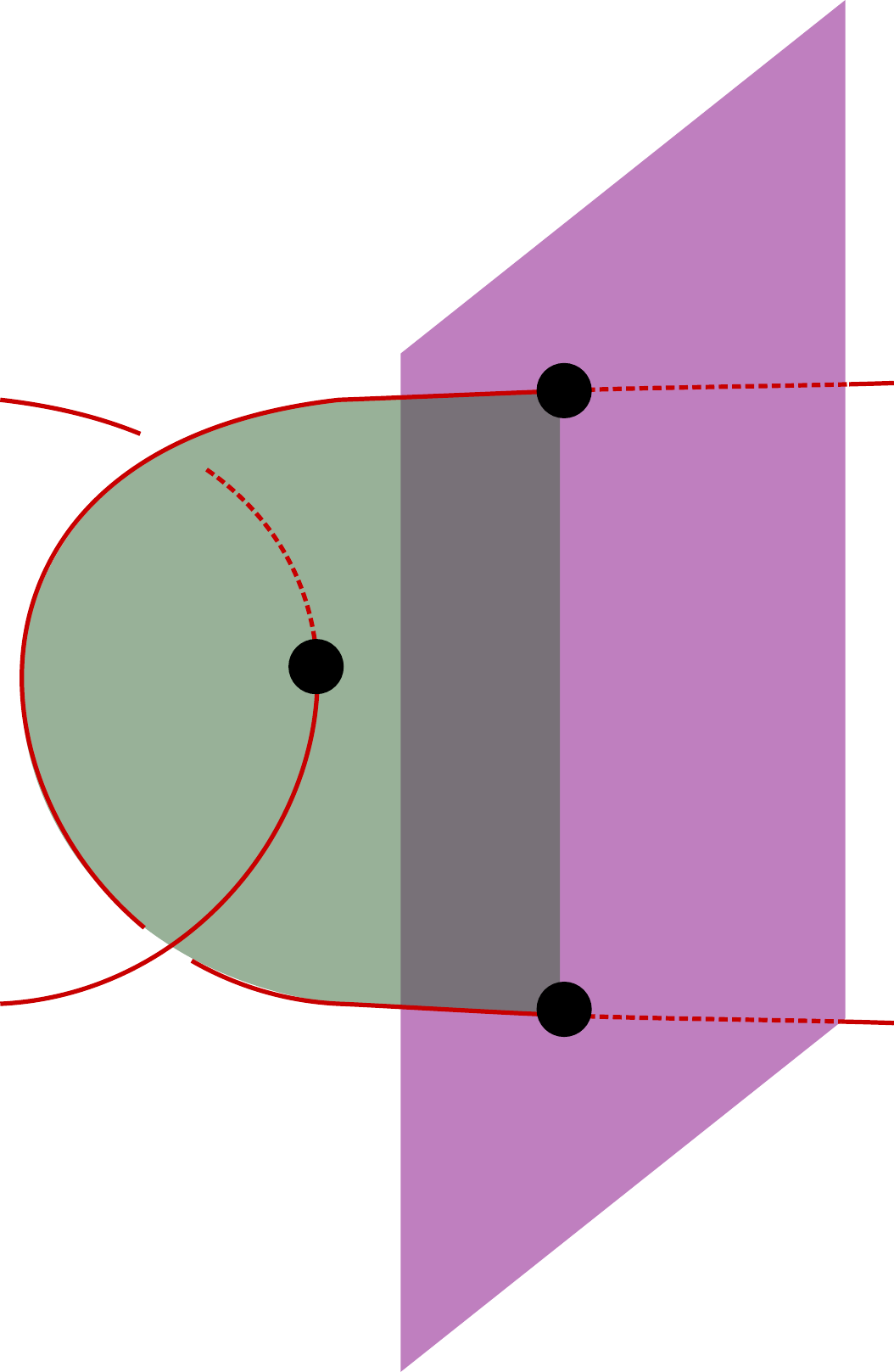}};
        \node at (-.75,0){\tiny$U$};
        \node at (-1.2,.7){\tiny$T_h$};
        \node at (1.2,.7){\tiny$T_h$};
        \node at (.6,0){\tiny$L_i^0$};
        \end{tikzpicture}
        \end{subfigure}\hspace{.1\textwidth}
     \begin{subfigure}[b]{0.3\textwidth}
         \centering
         \begin{tikzpicture}
        \node at (0,0){\includegraphics[width=.4\textwidth]{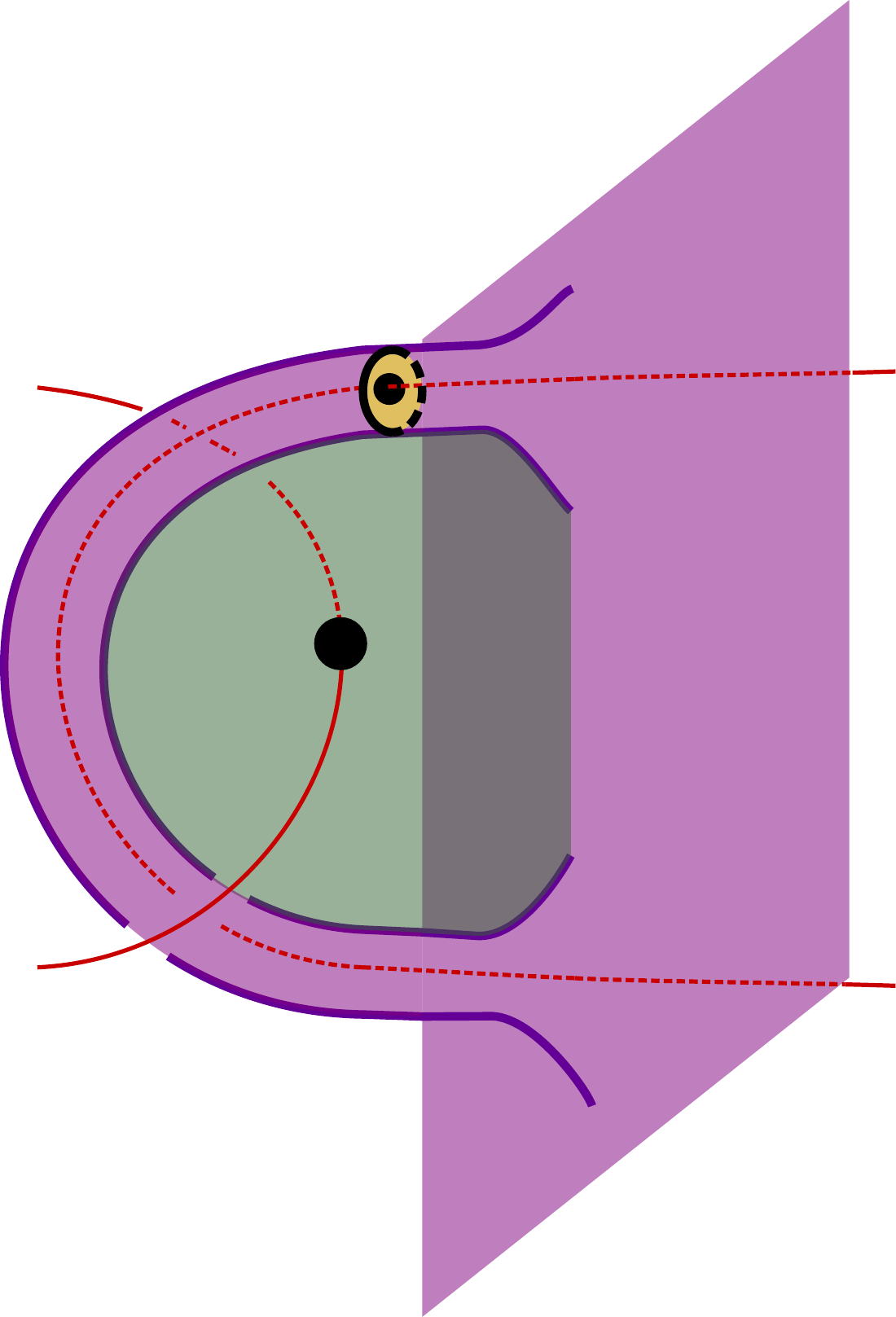}};
        \node at (-.1,.85){\tiny$V$};
        \node at (-.5,0){\tiny$U$};
        \node at (.7,0){\tiny$ L_i$};
        \node at (-1.1,.7){\tiny$T_h$};
        \node at (1.2,.65){\tiny$T_h$};
         \end{tikzpicture}
         \end{subfigure}
        \caption{Left: Two points of intersection between $L_i^0$ and $T_h$ which are paired by an embedded Whitney disk $U$ intersecting $T_h$ in one point Right: Tubing produces a surface of genus 1 with a symplectic basis bounding embedded disks $U$ and $V$ each of which intersects $T_h$ in a single point.  
         }
        \label{fig: Fix Lagrangian 2}
\end{figure} 

Our next goal is to replace $L_i^0$ with an embedded surface homologous to $L_i^0$ which extends to a height $h+1$ grope in the exterior of $T'$.  Currently, $L_i^0$ intersects $T_{h+1}'$ transversely in a single point.  By performing a finger move we isotope $L_i^0$ to be disjoint from $T_{h+1}'$ and to instead intersect $T'_{h}$ in two points of opposite sign.  As in Figure~\ref{fig: Fix Lagrangian 1}, these two points of intersection are paired with an embedded Whitney disk $U$ disjoint from $T_{h+1}$ whose interior intersects $T_h$ transversely in a single point.  As in Figure~\ref{fig: Fix Lagrangian 2} we tube $L_i^0$ to itself along the arc $\bdry U\cap T_h$ to get an embedded genus 1 surface $L_i$ disjoint from $T$.  
  Also in Figure~\ref{fig: Fix Lagrangian 2} we see that a pair of disks $U$ and $V$ bounded by a symplectic basis for $L_i$.  These disks each intersect $T_h$ transversely in a single point and  $L_i\cup U\cup V$ is a height $2$ grope.

Next we apply exactly the same steps as the preceding paragraph to $U$ and $V$ this time instead of $L^0$. First, modify $U$ and $V$ by a finger move so that they are disjoint from $T_{h}$ and each intersect $T_{h-1}$ in two points paired with a Whitney disk.  Then add a tube to each to replace $U$ and $V$ by genus 1 surfaces which admit symplectic bases bounding disks that each intersect  $T_{h-1}$ in a single point. The genus 1 surface $L_i$ now  forms the height $1$ part of a height $3$ grope $G$ which is disjoint from $T'$ except that the height 3 part of $G$ consists of disks each intersecting the $T'_{h-1}$ in a single point.  

  For any $k\in \{1,\dots,h+2\}$ we iterate this construction to produce a height $k$ grope $G_i$ with height 1-surface $L_i$, whose height $k$ part consists of disjoint embedded disks each intersecting $T'_{h+2-k}$ transversely in a single point, and which is otherwise disjoint from $T'$.  Setting $k=h+2$ and dropping the disks that make up the height $h+2$ part of $G$, we see that $L_i$ forms the height $1$ part of a height $h+1$ grope in the exterior of $T'$.  It follows that this grope is disjoint from $A$ since $A$ is contained in a neighborhood of $T'$.  As the sphere $L_i^0$ has trivial normal bundle so does $L_i$.  As $L_i$ forms the height $1$ part of a height $h+1$ grope disjoint from $A$, there is a generating set for $\pi_1(L_i)$ consisting of curves that bound height $h$ gropes disjoint form $A$.  We conclude that the image of $\pi_1(L_i)\to \pi_1(E(A))$ is contained in $\pi_1(E(A))^{(h)}$.  

Next consider $D_i^0$, the immersed sphere formed from the union of $\Delta_i$ with the disk attached to $\gamma_i$.  As the framing on $\gamma_i$ used to perform surgery is the same as that coming from the normal bundle of $\Delta_i$, $D_i^0$ has trivial normal bundle.  These spheres are not embedded, nor are they disjoint from each other, but they are disjoint from $T'$ and they are disjoint from $L_1\cup\dots\cup L_k$ except that $D_i^0$ intersects $L_i$ transversely in a single point.  

For each $i=1,\dots, k$ and each point of self intersection of $D_i^0$, tube $D_i^0$ to a pushed off copy of $L_i$.  This increases the genus of $D_i^0$ by 1, and reduces the number of self intersections by 1.  The homology class $[D_i^0]$ is replaced by $[D_i^0]\pm[L_i]$ depending on the sign of the intersection point removed.  In particular $\{[L_1], [D_1^0], \dots ,[L_k], [D_k^0]\}$ still generates $H_2(W')$.  Since the result of tubing together framed surfaces is still framed, $D_i^0$ is still framed.  By making this replacement at each point of self intersection, we arrange that $D_i^0$ is embedded.    Similarly, we  eliminate all points of intersection between any $D_i^0$ and $D_j^0$ by tubing $D_i^0$ into a pushed off copy of $L_j$.  Let $D_1,\dots, D_k$ be the resulting surfaces.  Each $D_i$ consists of a union of pushoffs of $L_1,\dots, L_k$ together with a planar surface.    As a consequence, the image of $\pi_1(D_i)\to \pi_1(E(A))$ is contained in the normal closure of the subgroup generated by the images of $\pi_1(L_1), \dots \pi_1(L_k)$.  As we have already explained, this is contained in $\pi_1(E(A))^{(h)}$.  

Thus, $L_1,D_1,\dots, L_k, D_k$ satisfy conditions~(\ref{solvable surfaces}), (\ref{solvable basis for H2}), and (\ref{solvable lift}) of Definition~\ref{defn: solvable concordance}.  Therefore $K$ and $J$ are $h$-solvably concordant, completing the proof.  
\end{proof}


\bibliographystyle{alpha}

\bibliography{biblio}

\end{document}